\UseRawInputEncoding
\documentclass[12pt]{article}

\usepackage{dsfont}
\usepackage{amsfonts,amsmath,amsthm}
\usepackage{algorithm, algorithmic}
\usepackage{color}
\usepackage{graphicx}
\usepackage{stmaryrd}        % provides \llbracket and \rrbracket

\usepackage{multicol}
\usepackage{multirow}

\usepackage[paper=a4paper,dvips,top=2cm,left=2cm,right=2cm,
foot=1cm,bottom=4cm]{geometry}

\newcommand{\red}[1]{\begin{color}{black}#1\end{color}}
\newcommand{\ve}{{\bf e}}

\begin{document}
	\large
	
	\title{Dual Number Matrices with Primitive and Irreducible Nonnegative Standard Parts}%\footnote{This work was partially supported by Hong Kong Innovation and Technology Commission (InnoHK Project CIMDA).}}
	\author{ Liqun Qi\footnote{Department of Mathematics, School of Science, Hangzhou Dianzi University, Hangzhou 310018 China; Department of Applied Mathematics, The Hong Kong Polytechnic University, Hung Hom, Kowloon, Hong Kong
			({\tt maqilq@polyu.edu.hk}).}
		%\and \
		%Xiangke Wang\thanks{College of Mechatronics and Automation, National University of Defence Technology, Changsha, 410073, China ({\tt
		%xkwang@nudt.edu.cn}).}
		\and \
		Chunfeng Cui\footnote{LMIB of the Ministry of Education, School of Mathematical Sciences, Beihang University, Beijing 100191 China.
			({\tt chungfengcui@buaa.edu.cn}).}
		%This author's work was supported by %Natural Science Foundation of China (No. ) and .
		%}
		%\and and \
		%Ziyan Luo\footnote{Corresponding author, Department of Mathematics,
		%Beijing Jiaotong University, Beijing 100044, China. ({\tt zyluo@bjtu.edu.cn}). This author's work was supported by Beijing Natural Science Foundation (Grant No. Z190002).}
	}
	\date{\today}
	\maketitle

	\begin{abstract}
		In this paper, we extend the Perron-Frobenius theory to dual number matrices with primitive and irreducible nonnegative standard parts.  One motivation of our research is to consider probabilities as well as perturbation, or error bounds, or variances, in the Markov chain process.  We show that  such a dual number matrix always has a positive dual number eigenvalue with a positive dual number eigenvector.   The standard part of this positive dual number eigenvalue is larger than or equal to the modulus of the standard part of any other eigenvalue of this dual number matrix.  We present an explicit formula to compute the dual part of this positive dual number eigenvalue.  The Collatz minimax theorem also holds here.
		\red{The results are nontrivial as even a positive dual number matrix may have no eigenvalue at all.}
		An algorithm based upon the Collatz minimax {theorem} is constructed. The convergence of the algorithm is studied.
		\red{We show the upper bounds  on the distance of stationary states
			between the dual Markov chain and the perturbed Markov chain.}
		Numerical results {on both  synthetic examples and dual Markov chain}
		{including some real world examples} are reported.

		\medskip

		% \medskip

		\textbf{Key words.} Dual numbers,
		% dual complex numbers,
		eigenvalues, {dual} primitive matrices, irreducible nonnegative matrices, {dual Markov chain}.
		
		%\medskip
		% \textbf{AMS subject classifications. }
	\end{abstract}

	\renewcommand{\Re}{\mathds{R}}
	\newcommand{\rank}{\mathrm{rank}}
	\newcommand{\X}{\mathcal{X}}
	\newcommand{\A}{\mathcal{A}}
	\newcommand{\I}{\mathcal{I}}
	\newcommand{\B}{\mathcal{B}}
	\newcommand{\C}{\mathcal{C}}
	\newcommand{\OO}{\mathcal{O}}
	\newcommand{\e}{\mathbf{e}}
	\newcommand{\0}{\mathbf{0}}
	\newcommand{\dd}{\mathbf{d}}
	\newcommand{\ii}{\mathbf{i}}
	\newcommand{\jj}{\mathbf{j}}
	\newcommand{\kk}{\mathbf{k}}
	\newcommand{\va}{\mathbf{a}}
	\newcommand{\vb}{\mathbf{b}}
	\newcommand{\vc}{\mathbf{c}}
	\newcommand{\vq}{\mathbf{q}}
	\newcommand{\vg}{\mathbf{g}}
	\newcommand{\pr}{\vec{r}}
	\newcommand{\ps}{\vec{s}}
	\newcommand{\pt}{\vec{t}}
	\newcommand{\pu}{\vec{u}}
	\newcommand{\pv}{\vec{v}}
	\newcommand{\pw}{\vec{w}}
	\newcommand{\pp}{\vec{p}}
	\newcommand{\pq}{\vec{q}}
	\newcommand{\pl}{\vec{l}}
	\newcommand{\vt}{\rm{vec}}
	\newcommand{\vx}{\mathbf{x}}
	\newcommand{\vy}{\mathbf{y}}
	\newcommand{\vu}{\mathbf{u}}
	\newcommand{\vv}{\mathbf{v}}
	\newcommand{\y}{\mathbf{y}}
	\newcommand{\vz}{\mathbf{z}}
	\newcommand{\T}{\top}
	
	\newtheorem{Thm}{Theorem}[section]
	\newtheorem{Def}[Thm]{Definition}
	\newtheorem{Ass}[Thm]{Assumption}
	\newtheorem{Lem}[Thm]{Lemma}
	\newtheorem{Prop}[Thm]{Proposition}
	\newtheorem{Cor}[Thm]{Corollary}
	\newtheorem{example}[Thm]{Example}
	\newtheorem{remark}[Thm]{Remark}
	
	\section{Introduction}
	
	Dual numbers and dual number matrices have applications in kinematic synthesis, dynamic analysis of spatial mechanisms, robotics, etc., and have attracted wide {attention} %attentions
	\cite{An12, GL87, PV09, PVDA16, QLY22, Wa21, WCW23}.
	
	%Nonnegative matrix theory and computation are an important part of matrix theory and computation \cite{BP94, HJ12}.   Nonnegative tensor theory and computation also form an important part of the spectral theory of tensors {\cite{NQZ09, QL17}}.
	%They have applications in spectral hypergraph theory and higher order Markov chains.

	For the Markov chain, {besides} the probability distribution, one may also need to consider the perturbation, or the error bound, or the variance, of the probability distribution.  Then we propose a dual Markov chain model to accommodate the perturbation, or the error bound, or the variance.  This motivates us to consider dual number matrices with nonnegative standard parts.
	
	%According to the total order of dual numbers, introduced by Qi, Ling and Yan \cite{QLY22},  we may also consider nonnegative dual number matrices.   Is there also a rich theory of nonnegative dual number matrices?   Do nonnegative dual number matrices have also meaningful applications?   %This will be the research target of this paper.
	
	Dual number matrices are the special cases of dual quaternion matrices.   By Qi and Luo \cite{QL23}, an $n \times n$ dual number symmetric matrix has exactly $n$ eigenvalue.   This matrix is positive definite if and only if these $n$ eigenvalues are nonnegative.
	
	However, the situation for eigenvalues of nonsymmetric dual number matrices is very bad.
	
	We may regard dual numbers as special cases of dual complex numbers.  There are two approaches to {considering} the eigenvalue theory of general dual complex matrices.  One approach is contained in \cite{QL21}.   In this approach, dual complex multiplication is defined as noncommutative.   The motivation for this is to represent rigid body motion in the plane.   %Following this approach, the eigenvalue theory of dual complex matrices is fragmented.
	As the multiplication is noncommutative, one only can define right and left eigenvalues as the quaternion matrix case.   %Then Example 2 of \cite{QL21} show that a dual complex matrix may have no right eigenvalue at all.
	We do not use this approach here.   Another approach is to consider dual complex numbers as a special case of dual quaternion numbers.   This approach is contained in \cite{QACLL22} and {has been} %be
	further studied in \cite{QC23}. With this approach, eigenvalues of dual complex matrices were defined in \cite{QACLL22}.  In \cite{QC23}, several examples of dual number matrices were given. In one example a square  positive  dual number matrix has no eigenvalues at all, and in another example a {nonnegative} square dual number matrix has infinitely many eigenvalues.  It was shown that an $n \times n$  dual complex matrix has exactly $n$ eigenvalues with $n$ appreciably linearly independent eigenvectors if and only if it is similar to a diagonal matrix.  In this paper, we follow this approach to investigate dual number matrices with primitive and irreducible nonnegative standard parts.
	
	In the next section, we consider the Markov chain, which not only has the probability distribution, but also its perturbation, or error bound, or variance.  We call such a Markov chain a dual Markov chain.
	
	In Section 3, we review some preliminary knowledge of dual numbers, dual complex numbers, and eigenvalues of dual complex matrices.
	
	In Section 4, we show that if a square dual number matrix $A$ has a primitive standard part $A_s$, then $A$ has an eigenvalue $\lambda$, which is a positive dual number, larger than the modulus of any other eigenvalue of $A$, and has a positive dual number eigenvector.  We call this eigenvalue the Perron eigenvalue of $A$.  In particular, we present an explicit formula to compute the dual part of $\lambda$.  Then we show that the Collatz minimax theorem holds for this Perron eigenvalue.
	
	In Section 5, we show that if a square dual number matrix $A$ has an irreducible nonnegative standard part $A_s$, then $A$ has an eigenvalue $\lambda$, which is a positive dual number, and has a positive dual number eigenvector.  {We call this eigenvalue the Perron-Frobenius eigenvalue of $A$.  The Collatz minimax theorem still holds for this Perron-Frobenius eigenvalue.}
	The standard part of the Perron-Frobenius eigenvalue $\lambda$ is greater than or equal to the modulus of the standard part of any other eigenvalue of $A$.  If $A_s$ is not primitive, i.e., the period $h$ of $A_s$ is greater than one, then $A$ has $h-1$ other eigenvalues $\lambda_j$ for $j = 1, \dots, h-1$, such that the modulus of the standard part of $\lambda_j$ is equal to $\lambda_s$.  We give an example to show that in this case, the modulus of $\lambda_j$ may be greater than the modulus of $\lambda$ for some $j$, in the sense of \cite{QLY22}.
	
	Based upon the Collatz minimax theorem for dual number matrices with primitive and irreducible nonnegative standard parts, we present an iterative algorithm for computing the Perron-Frobenius  eigenvalue in Section 6.
	We also study the convergence properties of this algorithm there.
	
	In Section 7, we report the numerical results of this algorithm {by several synthetic examples}.
	{Under the condition that the dual transition matrix has a  primitive or irreducible nonnegative standard part, and  $A_s^\top {\bf 1}={\bf 1}$,  $A_d^\top {\bf 1}={\bf 0}$,   we show the Perron-Frobenius eigenvalue of the dual number  matrix $A$ is always 1.
		Furthermore, we give an upper bound on the  distance of stationary states   between the dual Markov chain and the perturbed Markov chain.}
	We also compare dual Markov chain and perturbed Markov chain numerically there. %and study some applications.
	
	Some final remarks are made in Section 8.
	
	\section{Dual Markov Chain}
	
	A Markov chain is a stochastic model describing a sequence of events in which the probability of each event depends only on the state attained in the previous event \cite{CHNS06}.  Consider a stochastic process $\{ X_t : t = 1, 2, 3, \dots \}$.   Suppose that $X_t$ takes values in $n$ possible states $\{ 1, \dots, n \}$, where $n$ is a positive integer.  We may describe the situation at time $t$ by an $n$ dimensional real nonnegative vector
	$$\vx^{(t)} = \left(\begin{array} {c} x_1^{(t)} \\ \vdots \\ x_j^{(t)} \\ \vdots \\ x_n^{(t)} \end{array}\right),$$
	where $x_j^{(t)} = \text{Prob}(X_t = j)$.   Then we have $\sum_{j=1}^n x_j^{(t)} = 1$ and $x_j^{(t)} \ge 0$ for $j = 1, \dots, n$ {and $t\ge 1$}.   We call $\vx^{(t)}$ the probability distribution at time $t$.
	
	In the Markov chain model, the probability of $X_t = i$ if $X_{t-1} = j$, is
	$p_{ij} = \text{Prob}(X_t = i | X_{t-1} = j).$
	Then we have an $n \times n$  nonnegative matrix $P = (p_{ij})$, satisfying $p_{ij} \ge 0$ for $i, j = 1, \dots, n$ and $\sum_{i=1}^n p_{ij} = 1$ for $j = 1, \dots, n$.   We call $P$ a transition probability matrix.  Denote $S_n =  \{ \vx \in {\mathbb R}^n : \vx \ge \0, \sum_{j=1}^n x_j = 1 \}$.  If we have $\vx^{(t-1)} \in S_n$, then $\vx^{(t)} = P\vx^{(t-1)} \in S_n$.   Assume that
	$$\lim_{t \to \infty} \vx^{(t)} = \vx^*.$$
	Then $\vx^* \in S_n$ is called the stationary probability distribution of the Markov chain, or the limiting probability distribution of the transition probability matrix $P$.  Then we have
	\begin{equation} \label{pM1}
		P\vx^* = \vx^*.
	\end{equation}
	This implies that $\vx^*$ is an eigenvector of $P$, corresponding to the eigenvalue $\lambda = 1$.  Because of the properties of $P$, according to the nonnegative matrix theory \cite{BP94}, $\lambda = 1$ is the largest eigenvalue of $P$, called the Perron-Frobeninus eigenvalue of $P$.  There is a rich theory on nonnegative matrices, with {the} Markov chain as one of its applications.
	
	However, in the real world, the data has {perturbations}, or errors, or variances.  We need to consider the perturbation, or the error bound, or the variances.  Hence, for each probability $x_j^{(t)}$ and $p_{ij}$, we have to use two real numbers to denote them.  We may denote
	$x_j^{(t)} = \left(x_{sj}^{(t)}, x_{dj}^{(t)}\right)$.  Here, $x_{sj}^{(t)}$ is the probability, and $x_{dj}^{(t)}$ is its perturbation, or error bound, or variance.  The meanings of the two letters ``s'' and ``d'' will be clarified later.    We have $\vx^{(t)} = \left( \vx_s^{(t)}, \vx_d^{(t)} \right)$, where
	$$\vx_s^{(t)} = \left(\begin{array} {c} x_{s1}^{(t)} \\ \vdots \\ x_{sj}^{(t)} \\ \vdots \\ x_{sn}^{(t)} \end{array}\right) \ \ {\rm and} \ \ \vx_d^{(t)} = \left(\begin{array} {c} x_{d1}^{(t)} \\ \vdots \\ x_{dj}^{(t)} \\ \vdots \\ x_{dn}^{(t)} \end{array}\right).$$
	Then $\vx_s^{(t)} \in S_n$ and $\vx_d^{(t)}$ is a real vector or a nonnegative vector, depending {on} its physical meanings.   We call the two-part vector $\vx^{(t)}$ the {\bf dual probability distribution} at time $t$.   Similarly, we have $p_{ij} = \left(p_{sij}, p_{dij}\right)$.  Again, $p_{sij}$ is the probability, and $p_{dij}$ is its perturbation, or error bound, or variance.   We have $P = \left(P_s, P_d\right)$, where $P_s = \left( p_{sij}\right)$ is a nonnegative matrix, satisfying
	$\sum_{i=1}^n p_{sij} = 1$ for $j = 1, \dots, n$, and $P_d = \left( p_{dij}\right)$ is a real matrix or a nonnegative matrix, respectively.   We call the two-part matrix $P$ a {\bf dual transition probability matrix}.
	
	Now, we need to define the addition and multiplication of these two-part numbers.  Let $a = (a_s, a_d)$ and $b = (b_s, b_d)$ be two two-part numbers.  Here, $a_d$ and $b_d$ mean perturbations.  We may define $a+b = (a_s + b_s, a_d+ b_d)$.    For multiplication, we think that the product of perturbations should be neglected, i.e., we may think $a_db_d = 0$.  Then we may define $ab = (a_sb_s, a_sb_d+a_db_s)$.  It turns out that by such definitions of addition and multiplication, the two-part numbers are nothing else, but the dual numbers.  Then we may regard $\vx$ as a dual number vector, $\vx_s$ as its standard part, and $\vx_d$ as its dual part.  This explains the meanings of the two letters ``s'' and ``d''.   Similarly, we regard $P_s$ as the standard part of $P$, and $P_d$ as the dual part of $P$.  We still have $\vx^{(t)} = P\vx^{(t-1)}$.  Again, assume that
	$$\lim_{t \to \infty} \vx^{(t)} = \vx^*.$$
	Then $\vx^* = \left(\vx_s^*, \vx_d^*\right)$, $\vx^*_s \in S_n$,  and $\vx^*$ is called the {\bf dual stationary probability distribution} of the {\bf dual Markov chain}, or the {\bf dual limiting probability distribution} of the dual transition probability matrix $P$.  Then, what does the equation (\ref{pM1}) look like?
	Is $1$ still the largest eigenvalue of the dual number matrix $P$?   Or a dual number $\lambda$ is the largest eigenvalue of $P$ and the standard part of $\lambda$ is $1$.  To study more about the dual Markov chain, we have to develop the Perron-Frobenius theory for dual number matrices with
	nonnegative standard parts.  This motivated our research in this paper.

	\section{Dual Numbers, Dual Complex Numbers, and Eigenvalues of Dual Complex Matrices}

	\subsection{Dual Numbers and Dual Complex Numbers}
	
	The field of real numbers, the field of complex numbers, the set of dual numbers, and the set of dual complex numbers are denoted  by $\mathbb R$, $\mathbb C$, $\mathbb D$ and $\mathbb {DC}$, respectively.

	A {\bf dual complex number} $a = a_s + a_d\epsilon \in \mathbb {DC}$ has standard part $a_s$ and dual part $a_d$.   Both $a_s$ and $a_d$ are complex numbers.   The symbol $\epsilon$ is the infinitesimal unit, satisfying $\epsilon\neq 0$, $\epsilon^2 = 0$, and $\epsilon$ is commutative with complex numbers.  If $a_s \not = 0$, then we say that $a$ is {\bf appreciable}.    If $a_s$ and $a_d$ are real numbers, then $a$ is called a {\bf dual number}.
	
	The conjugate of $a = a_s + a_d\epsilon$ is $a^* = a_s^* + a_d^*\epsilon$.
	
	Suppose we have two dual complex numbers $a = a_s + a_d\epsilon$ and $b = b_s + b_d\epsilon$.   Then their sum is $a+b = (a_s+b_s) + (a_d+b_d)\epsilon$, and their product is $ab = a_sb_s + (a_sb_d+a_db_s)\epsilon$.
	The multiplication of dual complex numbers is commutative.
	
	Suppose we have two dual numbers $a = a_s + a_d\epsilon$ and $b = b_s + b_d\epsilon$.   By \cite{QLY22}, if
	$a_s > b_s$, or $a_s = b_s$ and $a_d > b_d$, then we say $a > b$.   Then this defines positive, nonnegative dual numbers, etc.
	Denote $\mathbb D_+$, $\mathbb D_{++}$ as the set of nonnegative  and positive dual numbers, respectively.
	In particular, for a dual number $a = a_s + a_d\epsilon$, its magnitude is defined as a nonnegative dual number
	$$|a| = \left\{ \begin{array}{ll} |a_s| + {\rm sgn}(a_s)a_d\epsilon, & \ {\rm if}\  a_s \not = 0, \\ |a_d|\epsilon, &   \ {\rm otherwise}.  \end{array}  \right.$$
	For any  dual   number $a=a_s+a_d\epsilon$ and dual number  $b=b_s+b_d\epsilon$ with $b_s\neq 0$, or $a_s=0$ and $b_s=0$, there is
	\begin{equation*}
		\frac{a_s+a_d\epsilon}{b_s+b_d\epsilon} =
		\left\{
		\begin{array}{ll}
			\frac{a_s}{b_s}+\left(  \frac{a_d}{b_s}- \frac{a_s}{b_s} \frac{b_d}{b_s}\right)\epsilon,   & \text{ if } b_s\neq 0, \\
			\frac{a_d}{b_d} +c\epsilon,  & \ \text{if } a_s= 0, b_s= 0,\\
		\end{array}
		\right.
	\end{equation*}
	where $c$ is an arbitrary   number.
	
	Later, when a dual number is   nonnegative or positive, we say it is a nonnegative dual number or a positive dual number respectively.   If we say a number is a nonnegative number or positive number, then that number should be a real number.
	We   use $0$, ${\bf 0}$, and $O$ to denote a zero number, a zero vector, and a zero matrix, respectively.
	%\red{We use $0_d$, ${\bf 0}_d$, and $O_d$ to denote a dual zero number, a dual zero vector, and a dual zero matrix, respectively.}
	
	A dual complex number vector is denoted by $\vx = (x_1, \cdots, x_n)^\top \in {\mathbb {DC}}^n$.  Its $2$-norm is defined as
	$$\|\vx\|_2 = \left\{\begin{array}{ll}
		{\|\vx_s\|_2+\frac{\vx_s^*\vx_d}{\|\vx_s\|_2}\epsilon,}
		%\sqrt{\sum_{i=1}^n |x_i|^2},
		& \ \mathrm{if} \  \vx_s \not = \0,\\
		\|\vx_d\|_2\epsilon, & \ \mathrm{if} \  \vx_s  = \0.
	\end{array}\right.$$
	We may denote $\vx = \vx_s + \vx_d\epsilon$, where $\vx_s, \vx_d \in {\mathbb C}^n$.
	When $n=1$, we have the magnitude of $x=x_s+x_d\epsilon\in\mathbb{DC}$ as
	$$|x| = \left\{\begin{array}{ll}
		{|x_s|+\frac{x_s^*x_d}{|x_s|}\epsilon,}
		& \ \mathrm{if} \  x_s \not = 0,\\
		|x_d|\epsilon, & \ \mathrm{if} \  x_s  = 0.
	\end{array}\right.$$
	If $\vx_s \not = \0$, then we say that $\vx$ is appreciable.
	The unit vectors in ${\mathbb R}^n$ are denoted as $\ve_1, \cdots, \ve_n$.   They are also unit vectors of ${\mathbb {DC}}^n$.

	We say $\vx= (x_1, \cdots, x_n)^\top \in {\mathbb {DC}}^n$ is a unit vector if $\|\vx\|_2=1$, or equivalently, $\|\vx_s\|_2=1$ and $\vx_s^*\vx_d=0$.
	We can define the normalization of $\vx$ following Theorem~3.3 in  \cite{CQ23}. Let $\vy=\frac{\vx}{\|\vx\|_2}$ be its normalization vector. Then   if $\vx$ is appreciable, we have
	\begin{equation*}
		\vy_s = \frac{\vx_s}{\|\vx_s\|_2}, \
		\vy_d = \frac{\vx_d}{\|\vx_s\|_2}-\frac{\vx_s}{\|\vx_s\|_2} \frac{\vx_s^*\vx_d}{\|\vx_s\|_2^2};
	\end{equation*}
	otherwise, if $\vx_s=\0$, there is
	\begin{equation*}
		\vy_s = \frac{\vx_d}{\|\vx_d\|_2}, \
		\vy_d \text{ is any complex vector satisfying }\vy_s^*\vy_d=0.
	\end{equation*}

	{A dual complex number matrix is denoted by $A=A_s+A_d\epsilon\in {\mathbb {DC}}^{n\times n}$.
		If $A_s$ is {invertible},
		then $A$ is also {invertible} and $A^{-1}=A_s^{-1}-A_s^{-1}A_dA_s^{-1}\epsilon$ \cite{QC23}.}
	The $F^R$-norm of $A\in\mathbb{DC}^{m\times n}$ is $\|A\|_{F^R}=\sqrt{\|A_s\|_F^2+\|A_d\|_F^2}$.
	
	%\blue{We say that a sequence of dual complex numbers, or dual complex vectors, or dual complex matrices, tends to a limit, if the standard part and the dual part of the sequence tends to the standard part and the dual part of the limit, respectively.}

	% The absolute value of $a\in\mathbb D$ is defined as \cite{QLY22}
	% \begin{equation*}
	%     |a| = \left\{\begin{array}{cc}
	%        a_s+a_d\epsilon,  & \text{ if } a_s>0; \\
	%         -a_s-a_d\epsilon,  & \text{ if } a_s<0; \\
	%        |a_d|\epsilon,  & \text{ if } a_s=0. \\
	%     \end{array}\right.
	% \end{equation*}
	% Hence, we define $\text{sign}(a)=1$ if $a_s>0$ or $a_s=0, a_d>0$ and $\text{sign}(a)=-1$ otherwise. Then $|a|=\text{sign}(a)a$.

	\bigskip
	
	\subsection{Eigenvalues of Dual Complex Matrices}
	
	The eigenvalues of a real matrix may be a complex number.    Thus, classical matrix analysis is conducted for complex matrices \cite{HJ12}.
	%Thus, for considering
	For eigenvalues of dual number matrices, we have to consider eigenvalues of dual complex matrices.    Eigenvalues of dual complex matrices were introduced in \cite{QL21}, and studied in detail in \cite{QC23}.
	
	Let $A\in\mathbb{DC}^{n\times n}$.
	If
	\begin{equation} \label{en1}
		A\vx = \lambda\vx,
	\end{equation}
	where $\vx$ is appreciable, i.e., $\vx_s \not = \0$, then $\lambda$ is called an eigenvalue of $A$, with an eigenvector $\vx$.

	Since $A = A_s + A_d\epsilon$, $\lambda = \lambda_s + \lambda_d \epsilon$, and $\vx = \vx_s + \vx_d \epsilon$,
	(\ref{en1}) is equivalent to
	\begin{equation} \label{en2}
		A_s\vx_s = \lambda_s\vx_s,
	\end{equation}
	with $\vx_s \not = \0$, i.e., $\lambda_s$ is an eigenvalue of $A_s$ with an eigenvector $\vx_s$, and
	\begin{equation} \label{en3}
		(A_s-\lambda_sI)\vx_d - \lambda_d\vx_s = -A_d\vx_s.
	\end{equation}
	
	Denote the spectral radius of $A_s$ by ${\rho(A_s)}$.  Then we see that for any eigenvalue $\lambda$ of $A$ and any positive number $\delta$, we have
	\begin{equation} \label{upperbound}
		|\lambda| < {\rho(A_s)} + \delta.
	\end{equation}
	As shown in \cite{QC23}, a square dual number matrix may have no eigenvalue at all, or have infinitely many eigenvalues.
	
	See the following example from
	\cite{QC23}.
	
	{\bf Example 1} - A dual number matrix $A$ has no eigenvalue at all.
	
	Suppose that $A = A_s+A_d\epsilon$, where
	\begin{equation*}
		A_s = \left(\begin{array}{cc}
			1 & 1 \\
			0 & 1
		\end{array} \right)
		\text{ and }
		A_d = \left(\begin{array}{cc}
			0 & 0 \\
			1 & 0
		\end{array} \right).
	\end{equation*}
	All possible eigenpairs of $A_s$ are: $\lambda_s=1$, $\vx_s= \left({\alpha \atop 0}\right)$, where $\alpha \not = 0$. Then (\ref{en3}) is equivalent to
	\begin{equation*}
		\left(\begin{array}{c}
			x_{d,2}\\
			0
		\end{array} \right) =\alpha \left(\begin{array}{c}
			\lambda_d\\
			-1
		\end{array} \right),
	\end{equation*}
	where $\alpha \not = 0$.
	Hence, (\ref{en3}) has no solution for all possible eigenpairs of $A_s$.    This implies that  $A$ has no eigenvalue at all.
	
	%This example is from \cite{QC23}.  Hence, we put it in this preliminary section.
	Note that $A$ in this example is a positive dual number matrix in the sense of \cite{QLY22}. However, $A$ has no eigenvalue at all.  Thus, it does not {make}  sense to discuss the Perron theorem for a general positive dual number matrix.  In this paper, we only consider the Perron theorem of a dual number matrix when its standard part is {primitive} or irreducible nonnegative.
	
	We have not defined the spectral radius of a square dual complex matrix $A$.   There are three cases for which it is difficult to define the spectral radius of $A$.  The first case is like Example 1, in which $A$ has no eigenvalue at all.  The second case is that though $A$ has some eigenvalues, but for all eigenvalues $\lambda_s$ of $A_s$, whose modulus is equal to the spectral radius of $A_s$, there is no complex number $\lambda_d$ such that $\lambda = \lambda_s + \lambda_d \epsilon$ is an eigenvalue of $A$.   The third case is that there is an eigenvalue $\lambda_s$ of $A_s$, whose modulus is equal to the spectral radius of $A$, and there {are}  infinitely many complex numbers $\lambda_d$, such that $\lambda = \lambda_s + \lambda_d \epsilon$ is an eigenvalue of $A$, and the {modulus} of such $\lambda_d$ is unbounded.  Example 2 of \cite{QC23} is an example of the third case. Fortunately, if $A$ is primitive or irreducible nonnegative, these three cases do not happen.

	\bigskip
	
	\section{Primitive Dual Number Matrices}

	Suppose that $A = A_s + A_d\epsilon$ is an $n \times n$ dual number matrix, where $n$ is a positive integer, $A_s$ is a real primitive matrix, i.e., $A_s \ge O$ and there is a positive integer {$m$} such that $A_s^{{m}}$ is positive,  and $A_d$ is a real matrix.  Then we call $A$ a {\bf primitive dual number matrix}.
	We call a dual number matrix with a positive standard part a {\bf positive dual number matrix}.  {The dual transition probability matrix is such an example if its standard part is primitive.}
	A primitive dual number matrix with {$m=1$} is a {positive dual number matrix.}
	%dual number matrix with a positive standard part.
	Note that {a primitive dual number matrix}
	%in this case $A$
	may not be a nonnegative dual number matrix in the sense of \cite{QLY22}, as some components of $A$ may have a zero standard part and a negative dual part.

	\begin{Thm}
		Suppose that $A = A_s + A_d\epsilon$ is an $n \times n$  primitive  dual number matrix, where $n$ is a positive integer, $A_s$ is a real  primitive  matrix and $A_d$ is a real matrix.   Then $A$ has a positive dual number eigenvalue $\lambda = \lambda + \lambda_d\epsilon$, and an $n$-dimensional positive dual number  eigenvector $\vx = \vx_s + \vx_d\epsilon$, corresponding to $\lambda$.  Furthermore, $\lambda_s, \vx_s, \lambda_d$ and $\vx_d$ have the following properties.
		
		(a) $\lambda_s$ is the Perron eigenvalue of $A_s$,  with multiplicity $1$. For any other dual complex eigenvalue $\mu = \mu_s + \mu_d\epsilon$ of $A$, we have
		\begin{equation} \label{e7}
			\lambda_s > |\mu_s|.
		\end{equation}
		
		(b) $\vx_s$ is the Perron eigenvector of $A_s$.  For any eigenvector $\vz = \vz_s + \vz_d\epsilon$, corresponding to the other eigenvalue $\mu$ of $A$, $\vz_s$ cannot be a nonnegative vector.
		
		(c) We have
		\begin{equation} \label{e6aa}
			\lambda_d = {\vy_s^\top A_d \vx_s \over \vy_s^\top \vx_s},
		\end{equation}
		where $\vy_s$ is the Perron eigenvector of $A_s^\top$, with $\vy_s^\top \vx_s > 0$.   If $A_d$ is nonnegative, then {$\lambda_d$} is nonnegative.  If $A_d$ is nonnegative and nonzero, then {$\lambda_d$} is positive.
		
		(d) $\vx_d$ is the solution of
		\begin{equation} \label{e6ab}
			(A_s-\lambda_sI)\vx_d = (\lambda_dI-A_d)\vx_s.
		\end{equation}
		If we require the $2$-norm of $\vx$ to be $1$, then $\vx_d$ is unique.
		
		Let $A = (a_{ij})$. Then we have
		\begin{equation}  \label{e7a}
			\min_i \sum_{j=1}^n a_{ij} \le \lambda \le \max_i \sum_{j=1}^n a_{ij}.
		\end{equation}
	\end{Thm}
	\begin{proof}
		% By definition, $A$ has a dual number eigenvalue $\lambda = \lambda_s + \lambda_d \epsilon$, if and only if there is nonzero dual number eigenvector $\vx = \vx_s + \vx_d \epsilon$ such that
		% \begin{equation} \label{e8}
		% A\vx = \lambda \vx.
		% \end{equation}
		% Then (\ref{e8}) is equivalent to
		% \begin{equation} \label{e9}
		% A_s\vx_s = \lambda_s\vx_s
		% \end{equation}
		% and
		% \begin{equation} \label{e10}
		% A_s\vx_d + A_d\vx_s = \lambda_s\vx_d + \lambda_d\vx_s.
		% \end{equation}
		Since $A_s$ is a  real primitive  matrix, by the {\bf primitive} matrix theory \cite{BP94}, $A_s$ has a positive real eigenvalue $\lambda_s$ with a positive real eigenvector $\vx_s$ such that for any  other  eigenvalue $\mu$ of $A_s$, we have $\lambda_s > |\mu_s|$, and the multiplicity of $\lambda_s$ is $1$.
		Hence, we may assume that $\lambda_s$ and $\vx_s$ in (\ref{en2}) are the positive real eigenvalue and its positive real eigenvector of $A_s$, with properties (a) and (b).
		
		%Now the question is if (\ref{en3}) has a solution $\lambda_d$ and $\vx_d$, and the solution $\lambda_d$ is unique?
		
		Next, we show (\ref{en3}) has a solution $\lambda_d$ and $\vx_d$, and  derive the formula (\ref{e6aa}) for $\lambda_d$.
		We may write (\ref{en3}) as
		\begin{equation} \label{e11}
			B\vz = -A_d\vx_s,
		\end{equation}
		where $B$ is an $n \times (n+1)$ matrix with the first $n$ columns as $A_s-\lambda_s I$ and the last column as $\vx_s$, i.e.,
		$$B = \left(\begin{array}{cc}
			A_s - \lambda_sI & \vx_s
		\end{array}\right)\ {\rm and}\ \vz =\left(	\begin{array}{c}\vx_d \\ -\lambda_d\end{array}\right).$$
		Since $\lambda_s$ is an eigenvalue of $A_s$ with multiplicity of $1$  and   $\vx_s$ is its corresponding eigenvector, we have the  rank of $A_s - \lambda_s I$ is $n-1$.
		Furthermore,  let $\vy_s$ be the Perron eigenvector of $A_s^\top$ corresponding to $\lambda_s$. Then $\vy_s$ is also a positive real vector.
		By $\vx_s^\top \vy_s>0$ and $\vy_s^\top (A_s - \lambda_s I) = {\0}^\top$, the rank of $B$ is $n$ and \eqref{e11} always has a solution. Furthermore, (\ref{e6aa}) and (\ref{e6ab}) hold.  %$\vx_s \notin \text{null}(\vy_s) = \text{span}(A_s-\lambda_s I)$. Hence,}
		%the  rank of $B$ is $n$.  Then the equation (\ref{e11}) has a solution $\vz$, \blue{and this solution is real}.  Thus, $\lambda$ is an eigenvalue of $A$, with an eigenvector $\vx$.   Furthermore, the value of $\lambda_d$ is unique \blue{and real}.
		
		In addition, if we require the 2-norm of $\vx$ to be  1, then $\vx_s^T\vx_d=0$. Combining this with (\ref{en3}), we have
		\begin{equation} \label{equ:xd_unique}
			\left(
			\begin{array}{c}
				A_s - \lambda_sI   \\
				\vx_s^\top
			\end{array}\right)\vx_d =	\left(
			\begin{array}{c} \lambda_d\vx_s-A_d\vx_s\\0	\end{array}\right).
		\end{equation}
		Since the rank of $A_s - \lambda_sI$ is $n-1$ and the null space  of $(A_s - \lambda_sI)^\top$  is generated by $\vx_s$, we have the  rank of the coefficient matrix in \eqref{equ:xd_unique} is $n$.  Thus,   equation \eqref{equ:xd_unique} has a unique solution $\vx_d$.

		Now assume that $\mu = \mu_s + \mu_d\epsilon$ is an eigenvalue of $A$ with an eigenvector $\vv = \vv_s + \vv_d\epsilon$, such that $\mu_s \not = \lambda_s$.   Then  $\vv_s \not = \0$,  %\red{and the multiplicity of $\mu_s$ is 1},
		and $\mu_s$ is an eigenvalue of $A_s$ with an eigenvector $\vv_s$.  By the nonnegative matrix theory \cite{BP94}, (\ref{e7})  has to hold.

		Let $x_k = \min_j x_j$.   Then $x_k > 0$ {and has a positive appreciable part}.   From $A\vx = \lambda \vx$, we have
		$$\sum_{j=1}^n a_{kj} x_j = \lambda x_k,$$
		i.e.,
		$$\sum_{j=1}^n a_{kj} (x_j-x_k) + x_k\sum_{j=1}^n a_{kj} = \lambda x_k.$$
		This implies that
		$$\min_i \sum_{j=1}^n a_{ij} \le \sum_{j=1}^n a_{kj} \le \lambda.$$
		This proves the first inequality of (\ref{e7a}).   Similarly, we can prove the second inequality of (\ref{e7a}).
	\end{proof}

	{Note that it is nontrivial that $\lambda$ is a dual number, as in general, an eigenvalue of a dual number matrix is a dual complex number.}  We call $\lambda$ in this theorem the {\bf Perron eigenvalue} of $A$ and $\vx$ the {{\bf Perron eigenvector} or} {\bf right Perron eigenvector} of $A$.  By (\ref{e7}), for any other eigenvalue $\mu$ of $A$, we have $\lambda > |\mu|$.  Thus, we may call $\lambda$ the {\bf spectral radius} of $A$, denoted as $\rho(A) = \lambda$.
	
	{We may see that $A^\top = A_s^\top + A_d^\top\epsilon$ is also a primitive dual matrix.  The Perron eigenvalue $\lambda$ of $A$ is also the Perron eigenvalue of $A^\top$.   The Perron eigenvector $\vy = \vy_s + \vy_d\epsilon$ of $A^\top$ is called the {\bf left Perron eigenvector} of $A$.  We have $\vy^\top\vx > 0 $.}

	In the following, we extend the Collatz minimax theorem to primitive dual number matrices.
	
	% \red{Question: should we define the spectral norm of $A$?}
	
	\begin{Thm}\label{thm3.2}
		Suppose that $A = A_s + A_d\epsilon$ is an $n \times n$ dual number matrix, where $n$ is a positive integer, $A_s$ is a real  primitive  matrix and $A_d$ is a real matrix.  Let $\lambda = \lambda_s + \lambda_d \epsilon$ be the Perron eigenvalue of $A$.   Then for any $n$-dimensional positive dual number vector $\vx$, we have
		\begin{equation} \label{e12}
			\min_i {(A\vx)_i \over x_i} \le \lambda \le \max_i {(A\vx)_i \over x_i}.
		\end{equation}
		Furthermore, we have
		\begin{equation} \label{e12b}
			\max_{\vx>{\bf 0}}\min_i {(A\vx)_i \over x_i} = \lambda = \min_{\vx>{\bf 0}}\max_i {(A\vx)_i \over x_i}.
		\end{equation}
		
	\end{Thm}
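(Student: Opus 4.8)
The plan is to mimic the classical Collatz--Wielandt argument, performing each monotonicity step with respect to the lexicographic order on $\mathbb D$. The first ingredient is a \emph{positive dual number left eigenvector} $\vy = \vy_s + \vy_d\epsilon$ of $A$ associated with the same Perron eigenvalue $\lambda$. Since $(A_s^\top)^m = (A_s^m)^\top$, the matrix $A_s^\top$ is primitive whenever $A_s$ is, so Theorem~3.1 applies to $A^\top = A_s^\top + A_d^\top\epsilon$ and produces a positive dual number eigenvector $\vy$ (with $\vy_s > \mathbf 0$) and a Perron eigenvalue $\lambda' = \lambda'_s + \lambda'_d\epsilon$ with $A^\top\vy = \lambda'\vy$. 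One checks $\lambda'_s = \rho(A_s^\top) = \rho(A_s) = \lambda_s$; and multiplying the dual-part equation~(\ref{en3}) written for $A^\top$ on the left by the left Perron vector of $A_s^\top$ (which is $\vx_s$) gives $\lambda'_d = \frac{\vy_s^\top A_d\vx_s}{\vy_s^\top\vx_s}$, the same scalar obtained by multiplying~(\ref{en3}) for $A$ on the left by $\vy_s^\top$. Hence $\lambda' = \lambda$, and transposing $A^\top\vy = \lambda\vy$ gives $\vy^\top A = \lambda\vy^\top$ with $\vy$ componentwise a positive appreciable dual number.

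Next I would record the two elementary order facts on $\mathbb D$ that drive the chain of inequalities: (i) if $a \le b$ and $c$ is a positive appreciable dual number then $ca \le cb$ and $a/c \le b/c$; and (ii) $a_1 \le b_1$, $a_2 \le b_2$ imply $a_1 + a_2 \le b_1 + b_2$. Both follow in a couple of lines from the two-case definition of the order (compare standard parts; if tied, compare dual parts). With these in hand, fix a positive dual number vector $\vx$; as in the proof of Theorem~3.1, $\vx > \mathbf 0$ means each $x_i$ is a positive appreciable dual number, so all the ratios $(A\vx)_i/x_i$ are well defined and, the order on $\mathbb D$ being total, $r := \min_i (A\vx)_i/x_i$ and $R := \max_i (A\vx)_i/x_i$ exist in $\mathbb D$. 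Multiplying by $x_i$ gives $r x_i \le (A\vx)_i \le R x_i$ for every $i$; multiplying the $i$th inequality by $y_i > 0$ and summing, using associativity of dual-number matrix products and $\vy^\top A = \lambda\vy^\top$, yields $r(\vy^\top\vx) \le \lambda(\vy^\top\vx) \le R(\vy^\top\vx)$. Since $\vy^\top\vx = \sum_i y_i x_i$ is a sum of products of positive dual numbers, it is itself a positive appreciable dual number, so cancelling it by fact~(i) gives $r \le \lambda \le R$, which is~(\ref{e12}).

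Finally, for~(\ref{e12b}) I would observe that~(\ref{e12}) already shows $\lambda$ is an upper bound for $\{\min_i (A\vx)_i/x_i : \vx > \mathbf 0\}$ and a lower bound for $\{\max_i (A\vx)_i/x_i : \vx > \mathbf 0\}$; and for the right Perron eigenvector $\vx$ itself, $A\vx = \lambda\vx$ with $\vx$ appreciable gives $(A\vx)_i/x_i = \lambda$ for all $i$, so both the min and the max over $i$ equal $\lambda$ at this particular $\vx$. Hence the outer supremum and infimum in~(\ref{e12b}) are attained and equal $\lambda$.

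I expect the main obstacle to be bookkeeping rather than conceptual: one must verify carefully that multiplication by, division by, and summation of nonnegative quantities respect the lexicographic order on $\mathbb D$ (so that the classical inequality chain survives), and one must keep "positive dual number vector" in the appreciable sense used in Theorem~3.1, so that every ratio $(A\vx)_i/x_i$ and the cancellation of $\vy^\top\vx$ are legitimate. Once these order facts and the positive left eigenvector are established, no new idea beyond the classical Perron--Frobenius proof is needed.
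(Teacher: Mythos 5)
Your argument is correct, but it takes a genuinely different route from the paper. The paper's proof is a one-liner: it conjugates by the diagonal dual-number matrix $X = \mathrm{diag}(x_1,\dots,x_n)$, observes that $X^{-1}AX$ is similar to $A$ (hence has the same Perron eigenvalue) and that its $i$th row sum equals $(A\vx)_i/x_i$, and then simply invokes the row-sum bound (\ref{e7a}) already established in Theorem~3.1; the equality (\ref{e12b}) follows by plugging in the right Perron eigenvector. You instead run the classical Collatz--Wielandt chain directly: you manufacture a positive dual-number \emph{left} Perron eigenvector $\vy$ for $A$ by applying Theorem~3.1 to $A^\top$ and verifying (via the formula $\lambda_d = \vy_s^\top A_d \vx_s / \vy_s^\top\vx_s$) that the Perron eigenvalue of $A^\top$ coincides with that of $A$, then pair the componentwise bound $r x_i \le (A\vx)_i \le R x_i$ against $\vy$ and cancel the positive appreciable scalar $\vy^\top\vx$. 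Both are sound. The paper's version buys brevity by recycling (\ref{e7a}); your version is self-contained, makes the monotonicity facts about the order on $\mathbb D$ explicit, and as a by-product records the useful identity for $\lambda_d$ in terms of the left and right eigenvectors of $A_s$, which the paper's proof leaves implicit. The only bookkeeping point worth being careful about (and you flag it yourself) is that all the order-preservation facts (i)--(ii) must be checked under the two-case lexicographic definition, including that a product of two positive appreciable dual numbers is positive appreciable — which is what legitimizes dividing by $\vy^\top\vx$ at the end.
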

	\begin{proof}  Let $\vx$ be an $n$-dimensional positive dual number vector and $X = {\rm diag}(x_1,$ $\cdots,$ $x_n)$.   Then the Perron eigenvalue of $A$ is equal to the Perron eigenvalue of $X^{-1}AX$.   Applying (\ref{e7a}) to $X^{-1}AX$, we have (\ref{e12}).
		Furthermore, by taking $\vx$ as the right Perron eigenvector  of $A$,  we have \eqref{e12b}.
	\end{proof}

	\bigskip

	%\section{Eigenvalues of Dual Complex Matrices}

	\section{Dual Number Matrices {with}  Irreducible Nonnegative Standard Parts}
	
	The results in the last section can be extended to dual number matrices with irreducible nonnegative standard parts.
	
	\begin{Thm}
		Suppose that $A = A_s + A_d\epsilon$ is an $n \times n$  dual number matrix, where $n$ is a positive integer, $A_s$ is a real irreducible nonnegative matrix and $A_d$ is a real matrix.   Then $A$ has a positive dual number eigenvalue $\lambda = {\lambda_s} + \lambda_d\epsilon$, where $\lambda_s$ is a positive real number and $\lambda_d$ is a real number, and {an  $n$-dimensional} positive dual number eigenvector $\vx = \vx_s + \vx_d\epsilon$, where $\vx_s$ is a positive real $n$-dimensional vector and $\vx_d$ is a real $n$-dimensional vector, corresponding to $\lambda$, such that {$\lambda_s$ is an eigenvalue of $A_s$ with multiplicity $1$,} and for any other dual complex eigenvalue $\mu = \mu_s + \mu_d\epsilon$ of $A$, we have
		\begin{equation} \label{e7b}
			\lambda_s \ge |\mu_s|.
		\end{equation}
		{The formulas (\ref{e6aa}) and (\ref{e6ab}) still hold}.  {If we require the $2$-norm of $\vx$ to be $1$, then $\vx_d$ is also unique.}
		
		{Furthermore, letting $A = (a_{ij})$, then we have
			\begin{equation}  \label{e14}
				\min_i \sum_{j=1}^n a_{ij} \le \lambda \le \max_i \sum_{j=1}^n a_{ij}.
		\end{equation}}
	\end{Thm}
	\begin{proof} Since $A_s$ is a  real irreducible nonnegative matrix, by the irreducible nonnegative matrix theory \cite{BP94}, $A_s$ has a positive real eigenvalue $\lambda_s$ with a positive real eigenvector $\vx_s$ such that for any other eigenvalue $\mu$ of $A_s$, we have $\lambda_s \ge |\mu_s|$, and the multiplicity of $\lambda_s$ is $1$.   Then the other part of the proof is similar to the proof of Theorem 3.1.
	\end{proof}
	
	We call $\lambda$ in this theorem the {\bf Perron-Frobenius eigenvalue} of $A$ and $\vx$   the {\bf right Perron-Frobenius eigenvector}.    By the nonnegative matrix theory \cite{BP94, HJ12}, the irreducible nonnegative matrix $A_s$ has a period $h$, which is a positive integer.  If $h =1$, then for other eigenvalue $\mu_s$ of $A_s$, we have $\lambda_s > |\mu_s|$. This implies that for all other eigenvalues $\mu$ of $A$, we have $\lambda > |\mu|$.  Thus, $\lambda$ can be called the {\bf spectral radius} of $A$, and denoted as $\rho(A) = \lambda$.
	{By  Definition 2.10 in \cite{V62}, $h=1$ if and only if $A_s$ is also a primitive matrix.}
	Then $A$ is a primitive dual number matrix in this case.
	
	Suppose $h > 1$.  By {the} nonnegative matrix theory \cite{BP94, HJ12}, $A$ has $h-1$ eigenvalues $\lambda_{s, j} =\lambda_s e^{2\pi j \ii \over h}$, where $\ii$ is the imaginary unit, for $j = 1, \dots, h-1$.  These eigenvalues are all simple roots of $A_s$.  Thus, for each $\lambda_{s, j}$,
	there is {a unique} %uniquely a
	complex number $\lambda_{d, j}$, such that $\lambda_j =
	\lambda_{s, j} + \lambda_{d, j}\epsilon$ is an eigenvalue
	of $A$.  For the other eigenvalues $\mu = \mu_s + \mu_d\epsilon$, we have $\lambda_s > |\mu_s|$, hence $\lambda > |\mu|$.  The question is,
	what is the relation of $\lambda$ and these $\lambda_j$ for $j = 1, \dots, h-1$? Or,  what is the relation of $\lambda_d$ and these $\lambda_{d, j}$ for $j = 1, \dots, h-1$?   The following example {shows} that there is no clear relation between $\lambda_d$ and these $\lambda_{d, j}$ for $j = 1, \dots, h-1$.
	
	{\bf Example 2} - A dual number matrix $A$, whose standard part $A_s$ is an irreducible nonnegative  matrix with a period $h$ greater than one.
	
	Suppose that $A = A_s+A_d\epsilon$, where
	\begin{equation*}
		A_s = \left(\begin{array}{cc}
			0 & 1 \\
			1 & 0
		\end{array} \right)
		\text{ and }
		A_d = \left(\begin{array}{cc}
			a & b \\
			c & d
		\end{array} \right).
	\end{equation*}
	Then $A_s$ is an irreducible nonnegative matrix, with its period $h =2$.  It has two eigenvalues: $\lambda_s=1$ with an eigenvector $\vx_s= \left({1 \atop 1}\right)$, and $\lambda_{s, 1}=-1$ with an eigenvector $\vx_s= \left({1 \atop -1}\right)$. Then
	we have $\lambda_d = {1 \over 2}(a+b+c+d)$ and $\lambda_{d, 1} = {1 \over 2}(a-b-c+d)$.   This implies that $A$ has two eigenvalues: the Perron-Frobenius eigenvalue $\lambda = \lambda_s + \lambda_d\epsilon$, and eigenvalue $\lambda_1 = \lambda_{s, 1} + \lambda_{d, 1}\epsilon$.  There is no clear relation between the modulus of the Perron-Frobenius eigenvalue $\lambda$, and the modulus of the other eigenvalue {$\lambda_1$}.
	
	We see that if $h > 1$, there are some troubles in distinguishing the largest eigenvalue of $A$.  Then we may use the ``shift'' technique to overcome this.   In the next section, we will discuss the ``shift'' technique.

	Similar  to Theorem 3.2,
	we have the following theorem.

	\begin{Thm}\label{thm4.2}
		Suppose that $A = A_s + A_d\epsilon$ is an $n \times n$ dual number matrix, where $n$ is a positive integer, $A_s$ is a real irreducible nonnegative  matrix and $A_d$ is a real matrix.  Let $\lambda = \lambda_s + \lambda_d \epsilon$ be the Perron-Frobenius eigenvalue of $A$.   Then for any $n$-dimensional positive dual number vector $\vx$, we have
		\begin{equation} \label{e15}
			\min_i {(A\vx)_i \over x_i} \le \lambda \le \max_i {(A\vx)_i \over x_i}.
		\end{equation}
		{Furthermore, we have
			\begin{equation} \label{e15b}
				\max_{\vx>{\bf 0}}\min_i {(A\vx)_i \over x_i} = \lambda = \min_{\vx>{\bf 0}}\max_i {(A\vx)_i \over x_i}.
			\end{equation}
		}
	\end{Thm}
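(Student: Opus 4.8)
The plan is to follow the proof of Theorem~\ref{thm3.2} essentially verbatim, replacing Theorem~3.1 by Theorem~4.1 throughout. Fix an arbitrary $n$-dimensional positive dual number vector $\vx$ and put $X = {\rm diag}(x_1, \dots, x_n)$. Since each $x_i$ is a positive dual number its standard part is a positive real number, so $X_s$ is invertible; hence $X$ is an invertible dual number matrix with $X^{-1} = X_s^{-1} - X_s^{-1}X_dX_s^{-1}\epsilon$, and $B := X^{-1}AX$ is again a dual number matrix, with real standard part $B_s = X_s^{-1}A_sX_s$ and real dual part.

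Because $X_s$ is a positive diagonal matrix, $B_s$ has exactly the same zero pattern as $A_s$, so $B_s$ is again a real irreducible nonnegative matrix, and Theorem~4.1 endows $B$ with a Perron--Frobenius eigenvalue. I would then check that this eigenvalue equals $\lambda$: if $\vv = \vv_s + \vv_d\epsilon$ is the right Perron--Frobenius eigenvector of $A$, then $B(X^{-1}\vv) = X^{-1}A\vv = \lambda(X^{-1}\vv)$, and $X^{-1}\vv$ has standard part $X_s^{-1}\vv_s > \0$, hence is a positive dual number vector. Thus $\lambda$ is a positive dual number eigenvalue of $B$ with a positive dual number eigenvector, and by the uniqueness statement of Theorem~4.1 (its standard part is the simple Perron root of $B_s$, and the dual part is then uniquely determined) $\lambda$ is the Perron--Frobenius eigenvalue of $B$. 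Applying the bound (\ref{e14}) to $B = (b_{ij})$ and using $b_{ij} = x_i^{-1}a_{ij}x_j$, so that $\sum_{j=1}^n b_{ij} = x_i^{-1}\sum_{j=1}^n a_{ij}x_j = (A\vx)_i/x_i$, immediately gives (\ref{e15}).

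For (\ref{e15b}), taking the maximum of the left-hand side and the minimum of the right-hand side of (\ref{e15}) over all positive dual number vectors $\vx$ yields $\max_{\vx>\0}\min_i (A\vx)_i/x_i \le \lambda \le \min_{\vx>\0}\max_i (A\vx)_i/x_i$; choosing $\vx$ to be the right Perron--Frobenius eigenvector of $A$, which is positive, makes $(A\vx)_i/x_i = \lambda$ for every $i$, so both extrema are attained there and equal $\lambda$.

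I do not expect a genuine obstacle. The only points requiring care are that the total order on $\mathbb D$ makes $\min_i$ and $\max_i$ over finitely many dual numbers well defined, that the quotient $(A\vx)_i/x_i$ is legitimate because $x_i$ is appreciable, and that diagonal similarity by a positive real diagonal preserves irreducibility and nonnegativity of the standard part so that Theorem~4.1 really applies to $B$. The one mild subtlety worth noting is that a positive dual number vector constrains only the components whose standard part vanishes, which is exactly why $X^{-1}\vv$ stays positive once its standard part $X_s^{-1}\vv_s$ is positive.
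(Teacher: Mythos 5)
Your argument is the same diagonal-similarity approach the paper uses for Theorem~\ref{thm3.2} (to which the paper simply refers for Theorem~\ref{thm4.2}), only written out in more detail: conjugate by $X = {\rm diag}(\vx)$, note the standard part stays irreducible nonnegative, apply the row-sum bound from Theorem~4.1, and then specialize $\vx$ to the right Perron--Frobenius eigenvector for the minimax equalities. It is correct and matches the paper's route.
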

	
	{The proof is similar to the proof of Theorem~\ref{thm3.2} and we do not repeat it here.}

	\bigskip

	\section{An Iterative Algorithm}
	% In \cite{CQ23}, a power method was proposed for computing the dominant eigenvalue of a dual quaternion Hermitian matrix.
	In this section, we consider the Collatz method for computing the
	Perron eigenpair of a primitive dual number matrix and the  Perron-Frobenius eigenpair of a dual number matrix with  irreducible nonnegative standard parts.
	
	We first  present  some useful results.
	\begin{Lem}
		Suppose that $A = A_s + A_d\epsilon$ is an $n \times n$ dual number matrix, where $n$ is a positive integer, $A_s$ is a real primitive or irreducible nonnegative  matrix and $A_d$ is a real matrix.
		{Let $A_s=(a_{ijs})$.}
		Then the following results hold.
		\begin{itemize}
			\item[(i)] The summation of each row of $A$ is positive and appreciable, i.e.,
			\begin{equation*}
				\sum_{j=1}^n a_{ijs} >0,\ \forall\, 1\le i\le n.
			\end{equation*}
			
			\item[(ii)] For any vector $\vx=\vx_s+\vx_d\epsilon\in\mathbb {D}^{n\times 1}$ with $\vx_s>{\bf 0}$, we have
			\begin{equation*}
				A_s\vx_s >{\bf 0}.
			\end{equation*}
			
			\item[(iii)] For any vectors  $\vx,\vy\in\mathbb {D}^{n\times 1}$ with   $\vx\ge\vy$, we have
			\begin{equation*}
				A\vx\ge A\vy.
			\end{equation*}
			
		\end{itemize}
	\end{Lem}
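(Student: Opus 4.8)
First I would treat (i) and (ii) together, since both concern only the nonnegative real matrix $A_s$. A primitive matrix is in particular irreducible --- if $A_s^m>O$ entrywise then the directed graph of $A_s$ is strongly connected --- so in either case $A_s$ is irreducible and nonnegative, hence has no zero row: a vanishing $i$-th row would leave vertex $i$ with no outgoing arc, contradicting strong connectivity when $n\ge2$, while for $n=1$ irreducibility (or primitivity) forces $a_{11s}>0$ directly. Thus, since $A_s\ge O$, each row $i$ has some entry $a_{ij_0s}>0$, so $\sum_{j=1}^n a_{ijs}>0$; since the standard part of the dual row sum $\sum_j a_{ij}$ of $A$ is exactly $\sum_j a_{ijs}$, this row sum is appreciable and positive, which is (i). (Alternatively one may read (i) off the Perron--Frobenius theorem already established: from $A_s\vx_s=\lambda_s\vx_s$ with $\vx_s>\0$, $\lambda_s>0$, one gets $\sum_j a_{ijs}x_{js}=\lambda_s x_{is}>0$, forcing some $a_{ij_0s}>0$.) For (ii), if $\vx=\vx_s+\vx_d\epsilon$ with $\vx_s>\0$, then for each $i$
\[
(A_s\vx_s)_i=\sum_{j=1}^n a_{ijs}x_{js}\ \ge\ a_{ij_0s}x_{j_0s}\ >\ 0,
\]
using $a_{ijs}\ge0$, $x_{js}>0$ and the index $j_0$ from (i); hence $A_s\vx_s>\0$.

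For (iii), I would put $\vz:=\vx-\vy$. Since the order on $\mathbb D$ is translation invariant, $\vx\ge\vy$ componentwise is equivalent to $z_j\ge0$ in $\mathbb D$ for every $j$, so it suffices to prove $A\vz\ge\0$, i.e.\ $(A\vz)_i=\sum_{j=1}^n a_{ij}z_j\ge0$ for each $i$. This reduces to two one-line facts, each a short case check according to which standard parts vanish: first, the product of two nonnegative dual numbers is a nonnegative dual number (if both standard parts vanish the product is $0$; if exactly one is positive the product is a nonnegative multiple of $\epsilon$; if both are positive so is the product's standard part); second, a finite sum of nonnegative dual numbers is nonnegative (either the total standard part is positive, or all standard parts, and then all dual parts, are nonnegative). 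Applying the first fact to each $a_{ij}z_j$ and the second to the sum over $j$ yields $(A\vz)_i\ge0$, hence $A\vx\ge A\vy$.

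The step I expect to be the real obstacle is concealed in that last argument: it uses that each entry $a_{ij}=a_{ijs}+a_{ijd}\epsilon$ of $A$ is itself a nonnegative dual number, whereas the hypothesis supplies only $A_s\ge O$. An entry with $a_{ijs}=0$ and $a_{ijd}<0$ --- which the stated hypotheses permit --- breaks the monotonicity immediately (for instance $a_{ij}z_j=-\epsilon<0$ when $z_j=1$, and indeed one can build a $2\times2$ primitive $A$ with $A\vx\not\ge A\vy$ for $\vx\ge\vy$). Hence I would state (iii) together with the additional hypothesis that $A$ is a nonnegative dual number matrix (which holds automatically, in particular, for positive dual number matrices, the setting of interest for the algorithm); once that is in force, the product-and-sum computation above completes the proof.
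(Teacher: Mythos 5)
Your handling of (i) and (ii) is correct and is essentially the paper's own argument, which the authors compress into the single sentence that these ``follow from the corresponding results for real matrices.'' For (iii), the paper likewise offers only a citation --- ``Item (iii) follows from Theorem 1 in [QLY22]'' --- and that cited result, about compatibility of the total order on dual numbers with dual-number arithmetic, only yields $A\vx\ge A\vy$ when the entries of $A$ are themselves nonnegative dual numbers, exactly as you suspect.

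You have caught a genuine flaw in the lemma as stated: (iii) is false under the stated hypotheses. The lemma assumes only $A_s\ge O$ with $A_d$ an arbitrary real matrix, and the paper even remarks in Section~3 that a primitive dual number matrix ``may not be a nonnegative dual number matrix in the sense of [QLY22], as some components of $A$ may have a zero standard part and a negative dual part.'' Your intended counterexample works concretely: take
\[
A_s=\begin{pmatrix}1&1\\1&0\end{pmatrix},\qquad A_d=\begin{pmatrix}0&0\\0&-1\end{pmatrix},
\]
so that $A_s$ is primitive ($A_s^2>O$) while $a_{22}=-\epsilon<0$; with $\vy=\0$ and $\vx=(0,1)^\top$ one has $\vx\ge\vy$ but $(A\vx)_2=-\epsilon<0=(A\vy)_2$, so $A\vx\not\ge A\vy$. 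The correct repair is the one you propose: add the hypothesis that $A\ge O$ as a dual number matrix (equivalently, $a_{ijd}\ge0$ wherever $a_{ijs}=0$), after which your reduction via $\vz=\vx-\vy$, translation invariance of the order, and the closure of $\mathbb D_+$ under products and finite sums give a valid, self-contained proof --- genuinely more informative than the paper's bare citation. Since item (iii) is invoked in the proof of Theorem~5.1 (to pass from $\vy^{(k)}\ge\underline{\lambda}_k\vx^{(k)}$ to $A\vy^{(k)}\ge\underline{\lambda}_k A\vx^{(k)}$), the same extra hypothesis, or a sharper argument exploiting the special structure of $\vz$ there, is implicitly needed in that theorem as well.
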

	\begin{proof}
		The first two items follow  from the corresponding results for real matrices. Item~$(iii)$ follows from Theorem 1 in \cite{QLY22}.
	\end{proof}

	\begin{Thm}\label{thm5.1}
		Suppose that $A = A_s + A_d\epsilon$ is an $n \times n$ dual number matrix, where $n$ is a positive integer, $A_s$ is a real primitive or irreducible nonnegative  matrix and $A_d$ is a real matrix. Let $\vx^{(0)}$ be an arbitrary  dual number  vector with a positive standard part and $\vy^{(0)} = A\vx^{(0)}$.
		For $k=0,1,2,\dots$, define
		\begin{equation}\label{eqn:xkyk}
			\vx^{(k+1)} = \frac{\vy^{(k)}}{\|\vy^{(k)}\|_2},\ \vy^{(k+1)} = A\vx^{(k+1)},
		\end{equation}
		and
		\begin{equation}\label{eqn:lmdk}
			\underline{\lambda}_k = \min_i {(A\vx^{(k)})_i \over x_i^{(k)}},\ \text{and } \overline{\lambda}_k = \max_i {(A\vx^{(k)})_i \over x_i^{(k)}}.
		\end{equation}
		Assume that $\lambda$ is the Perron eigenvalue or Perron-Frobenius eigenvalue  of $A$. Then we have
		\begin{equation}\label{eqn:lmk_increasing}
			\underline{\lambda}_0\le \underline{\lambda}_1 \le \dots \le \lambda \le \dots\le\overline{\lambda}_1\le\overline{\lambda}_0.
		\end{equation}
		
	\end{Thm}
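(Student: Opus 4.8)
The plan is to combine the Collatz minimax inequality for $A$ --- Theorem~\ref{thm3.2} when $A_s$ is primitive and Theorem~\ref{thm4.2} when $A_s$ is irreducible nonnegative --- with a monotonicity argument driven by the order-preserving property of $A$ recorded in item~$(iii)$ of the Lemma above.

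First I would check that the iteration \eqref{eqn:xkyk} is well defined and that every iterate is a positive dual number vector. Since $\vx^{(0)}$ has a positive standard part, item~$(ii)$ of the Lemma above gives $A_s\vx_s^{(0)}>\mathbf{0}$, so $\vy^{(0)}=A\vx^{(0)}$ has a positive standard part; hence $\vy^{(0)}$ is a positive dual number vector, $\|\vy^{(0)}\|_2$ is a positive appreciable dual number, and $\vx^{(1)}=\vy^{(0)}/\|\vy^{(0)}\|_2$ is well defined with positive standard part. By induction, for every $k$ both $\vx^{(k)}$ and $\vy^{(k)}=A\vx^{(k)}$ are positive dual number vectors, so the ratios $(A\vx^{(k)})_i/x_i^{(k)}$ in \eqref{eqn:lmdk} are defined and $\underline{\lambda}_k,\overline{\lambda}_k$ are dual numbers. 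Applying the per-vector minimax inequality \eqref{e12} (respectively \eqref{e15}) with $\vx=\vx^{(k)}$ then yields $\underline{\lambda}_k\le\lambda\le\overline{\lambda}_k$ for every $k$, which is the middle portion of \eqref{eqn:lmk_increasing}.

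It then remains to prove the monotonicity $\underline{\lambda}_k\le\underline{\lambda}_{k+1}$ and $\overline{\lambda}_{k+1}\le\overline{\lambda}_k$. From the definition of $\underline{\lambda}_k$, since $x_i^{(k)}>0$ and multiplication by a positive dual number preserves the total order on $\mathbb{D}$, we get $(A\vx^{(k)})_i\ge\underline{\lambda}_k x_i^{(k)}$ for all $i$, that is $\vy^{(k)}\ge\underline{\lambda}_k\vx^{(k)}$ componentwise. Applying $A$, using item~$(iii)$ of the Lemma above and commutativity of dual scalar multiplication, we obtain $A\vy^{(k)}\ge A(\underline{\lambda}_k\vx^{(k)})=\underline{\lambda}_k\vy^{(k)}$; multiplying through by the positive appreciable dual number $1/\|\vy^{(k)}\|_2$ preserves the inequality and gives $\vy^{(k+1)}=A\vx^{(k+1)}\ge\underline{\lambda}_k\vx^{(k+1)}$. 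Dividing the $i$-th component by $x_i^{(k+1)}>0$ and taking the minimum over $i$ gives $\underline{\lambda}_{k+1}\ge\underline{\lambda}_k$. The argument for $\overline{\lambda}_{k+1}\le\overline{\lambda}_k$ is identical with all inequalities reversed, starting from $\vy^{(k)}\le\overline{\lambda}_k\vx^{(k)}$. Chaining these monotone chains with $\underline{\lambda}_k\le\lambda\le\overline{\lambda}_k$ establishes \eqref{eqn:lmk_increasing}.

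The step that needs the most care is the repeated passage between scalar ratio inequalities and dual number vector inequalities: at each stage one must confirm that multiplying by the positive appreciable dual numbers $x_i^{(k)}$, $1/\|\vy^{(k)}\|_2$, $\underline{\lambda}_k$ and $\overline{\lambda}_k$, and applying $A$ through item~$(iii)$ of the Lemma (which itself rests on Theorem~1 of \cite{QLY22}), all respect the total order on $\mathbb{D}$. The standard-part comparisons come directly from the classical primitive and irreducible nonnegative matrix theory, but the dual-part tie-breaking for the indices at which the minimum or maximum in \eqref{eqn:lmdk} is attained must be tracked precisely; the rest is routine bookkeeping.
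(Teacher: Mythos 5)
Your proof is correct and follows essentially the same route as the paper's: use the Collatz minimax inequality (Theorems~\ref{thm3.2}/\ref{thm4.2}) for the middle inequality $\underline{\lambda}_k\le\lambda\le\overline{\lambda}_k$, then propagate $\vy^{(k)}\ge\underline{\lambda}_k\vx^{(k)}$ through the order-preserving map $A$ (Lemma item~(iii)) and divide by the positive appreciable scalar $\|\vy^{(k)}\|_2$ to get the monotone chains. You are somewhat more explicit than the paper about the inductive well-definedness of the iterates and the positivity of the normalizing factor, but the key steps and the lemma invoked are identical.
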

	\begin{proof}
		By Theorems~\ref{thm3.2} and \ref{thm4.2}, we have $ \underline{\lambda}_k \le \lambda \le  \overline{\lambda}_k$ for all $k\ge0$. We now prove that $\underline{\lambda}_k\le \underline{\lambda}_{k+1}$ and $\overline{\lambda}_{k+1}\le\overline{\lambda}_k$.
		For each $k\ge 0$ with $\vx_s^{(k)}>{\bf 0}$, we have
		\begin{equation*}
			\vy^{(k)} = A\vx^{(k)}\ge \underline{\lambda}_k\vx^{(k)}>{\bf 0} \text{ and } \vy_s^{(k)}>{\bf 0}.
		\end{equation*}
		Thus, $\vx_s^{(k+1)}>{\bf 0}$ and
		\begin{eqnarray*}
			A\vx^{(k+1)} = \frac{A\vy^{(k)}}{\|\vy^{(k)}\|_2}\ge\frac{\underline{\lambda}_kA\vx^{(k)}}{\|\vy^{(k)}\|_2} = \underline{\lambda}_k\frac{\vy^{(k)}}{\|\vy^{(k)}\|_2}=\underline{\lambda}_k\vx^{(k+1)}.
		\end{eqnarray*}
		Combining  this with \eqref{eqn:lmdk}, we have $\underline{\lambda}_k\le \underline{\lambda}_{k+1}$.
		Similarly, we can show $\overline{\lambda}_{k+1}\le\overline{\lambda}_k$. This completes the proof.
	\end{proof}

	\begin{algorithm}[t]
		\caption{The Collatz method {for computing eigenpairs of  dual number matrices with primitive and irreducible nonnegative standard parts}}
		\label{alg:collatz}
		\begin{algorithmic}[1]
			\REQUIRE   $A = A_s + A_d\epsilon\in\mathbb D^{n\times n}$ and $\vx^{(0)}$ satisfy Theorem \ref{thm5.1}. The maximal iteration $k_{\max}$, stopping criterion  $\delta_1,\delta_2>0$, and shift parameter $\rho>0$.
			\STATE Let $A=A+\rho I_n$, $\vy^{(0)} = A\vx^{(0)}$, and compute $\underline{\lambda}_{0}$ and $\overline{\lambda}_{0}$ by \eqref{eqn:lmdk}.  Denote \textsc{flag}=0.
			\FOR{$k=0,2,\dots, k_{\max}$,}
			\STATE Update $\vx^{(k+1)}$ and $\vy^{(k+1)}$ by \eqref{eqn:xkyk}.
			\STATE Update  $\underline{\lambda}_{k+1}$ and $\overline{\lambda}_{k+1}$ by \eqref{eqn:lmdk}.
			% \red{\STATE Let $\delta_{k+1}=\underline{\lambda}_{k+1}-\overline{\lambda}_{k+1}$, $\underline\delta_{s,k}=|\underline{\lambda}_{s,k}-\underline{\lambda}_{s,k+1}|$, $\bar\delta_{s,k}=|\overline{\lambda}_{s,k}-\overline{\lambda}_{s,k+1}|$;}
			\IF{$\|\underline{\lambda}_{k+1}-\overline{\lambda}_{k+1}\|_{F^R}\le \|A\|_{F^R}\cdot\delta_1,$}
			\STATE Let \textsc{flag}=1 and stop.
			\ENDIF
			\ENDFOR
			{
				\STATE   Denote $\vx=\vx^{k+1}$, $\underline{\lambda}=\underline{\lambda}_{k+1}-\rho$, $\overline{\lambda}=\overline{\lambda}_{k+1}-\rho$,
				and $\lambda=\frac{\underline{\lambda}+\overline{\lambda}}{2}$.
				\IF{$k=k_{\max}$ and $|\underline{\lambda}_{s,k+1}-\overline{\lambda}_{s,k+1}|\le \|A\|_{F^R}\cdot\delta_2$,}
				\STATE Compute $\lambda_d$ and $\vx_d$ via  \eqref{e11}.
				\STATE  Let \textsc{flag}=2.
				\ENDIF
			}
		\end{algorithmic}
		\textbf{Output:} {\textsc{flag},} $\vx$, $\lambda$,
		$\underline{\lambda}$ and $\overline{\lambda}$.
	\end{algorithm}

	{For real matrices, the method described by (\ref{eqn:xkyk}) and (\ref{eqn:lmdk}) is usually called the Collatz method.}
	We summarize the Collatz method for computing   the
	Perron eigenpair of a primitive dual number  matrix and the  Perron-Frobenius eigenpair of  a dual number matrix with an  irreducible nonnegative standard part  in Algorithm~\ref{alg:collatz}.

	%Theorem~\ref{thm5.1}, we obtain the following result.
	% \begin{Thm}\label{thm5.3}
	%     Suppose that $A = A_s + A_d\epsilon$ is an $n \times n$ dual number matrix, where $n$ is a positive integer, $A_s$ is a real primitive or irreducible nonnegative  matrix and $A_d$ is a real matrix. Let $\vx^{(0)}$ be an arbitrary vector with positive standard part and  $\lambda$ be the Perron eigenvalue or Perron-Frobenius eigenvalue. For the sequence generated by Algorithm~\ref{alg:collatz},
	%     let $\underline{\lambda} = \lim_{k\rightarrow\infty}\underline{\lambda}_{k+1}$ and $\overline{\lambda} = \lim_{k\rightarrow\infty}\overline{\lambda}_{k+1}$.
	%     Then $\underline{\lambda}$ and $\overline{\lambda}$ are a low bound and an upper bound of $\lambda$, respectively.
	%     If $\underline{\lambda}=\overline{\lambda}$, then $\lambda=\underline{\lambda}=\overline{\lambda}$.
	% \end{Thm}

	In Algorithm~\ref{alg:collatz}, $\rho>0$ is a shift parameter. If $A_s$ is irreducible nonnegative, then $A_s+\rho I_n$ is primitive.
	If $A_s$ is  weakly positive \cite{ZQX12}, i.e., $(A_s)_{ij}>0$ for all $i,j=1,\dots,n$ and $i\neq j$,
	then $A_s+\rho I_n$ is positive.
	See Figure~\ref{fig:A_s}.
	\begin{figure}[t]
		\centering
		\includegraphics[width=0.9\linewidth]{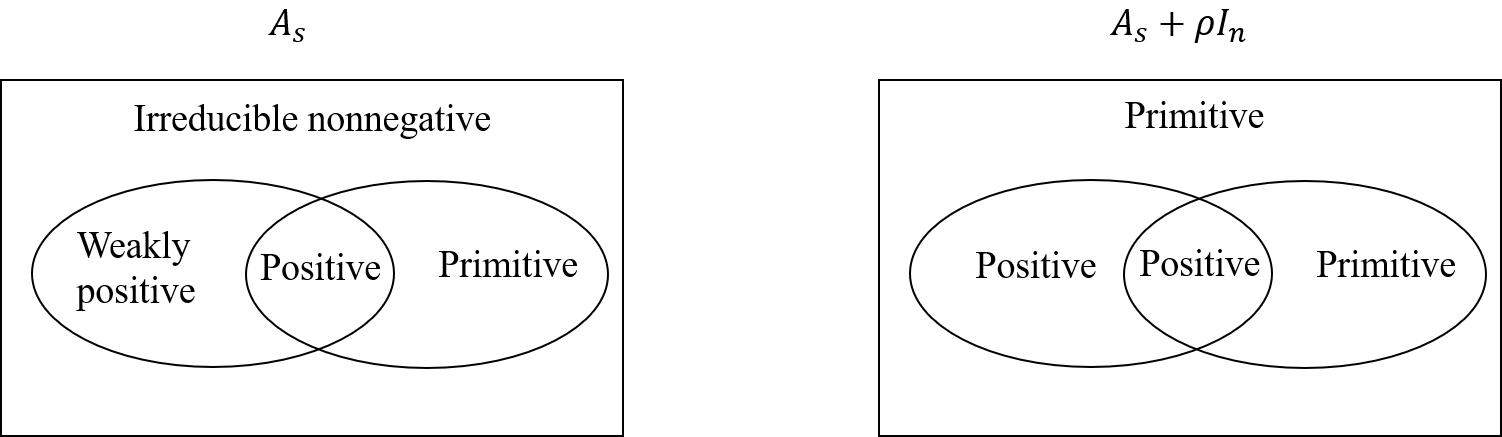}
		\caption{Relations of   irreducible nonnegative, weakly positive,  primitive, and positive  matrices and their shift.}
		\label{fig:A_s}
	\end{figure}

	In Algorithm~\ref{alg:collatz}, all computations of the standard parts in    $\vx_s^{(k)}$, $\vy_s^{(k)}$, $\underline{\lambda}_{s,k}$ and $\overline{\lambda}_{s,k}$
	are equivalent to that of implementing the Collatz method for $A_s$. If $A_d=O$,  $A$ reduces to a real matrix. Then  both  the sequences $\underline{\lambda}_{s,k}$ and $\overline{\lambda}_{s,k}$ converge to $\lambda_s$, from an arbitrary initial positive vector $\vx_s^{(0)}$, if and only if the
	irreducible matrix $A_s$ is primitive \cite{V62}.  Hence, {because the shift $\rho>0$,} both $\underline{\lambda}_{s,k}$ and $\overline{\lambda}_{s,k}$  converge to the same number $\lambda_s$.
	The convergence of dual number sequence  $\underline{\lambda}_{k}$ and $\overline{\lambda}_{k}$
	are more complicated.
	In together, we may categorize the results of Algorithm~\ref{alg:collatz} into three cases.
	\begin{enumerate}
		
		\item[(a)] \textsc{flag}=1. Both $\underline{\lambda}_{k}$ and $\overline{\lambda}_{k}$  converge to the same dual number $\lambda$, which is the Perron or Perron-Frobenius eigenvalue of $A$.
		
		{\bf Question.} What is the condition such that this case happens and what is the convergence rate?
		
		\item[(b)] \textsc{flag}=2. Both $\underline{\lambda}_{s,k}$ and $\overline{\lambda}_{s,k}$  converge to the same number $\lambda_s$, yet $\underline{\lambda}_{d,k}$ and $\overline{\lambda}_{d,k}$
		do not converge.
		In this case,   we compute the dual parts by solving the linear system \eqref{e11}.
		Since $B$  has full row rank,  the linear system \eqref{e11} is consistent.
		
		% \item[(c)] \textsc{flag}=3. $\underline{\lambda}_{s,k}$ and $\overline{\lambda}_{s,k}$ converge to different numbers.
		% This case happens when $A_s$ is not a primitive matrix.
		% By adding a positive number $\rho>0$ to  $A_s$, i.e., $B= A+\rho I$, we have $B_s$ is primitive. We can implement Algorithm~\ref{alg:collatz} again for $B$.
		\item[(c)] \textsc{flag}=0. The algorithm stops because the  maximal iteration number is reached and $\underline{\lambda}_{s,k}$ and $\overline{\lambda}_{s,k}$ do  not converge. We can enlarge the maximal iteration  number  and implement Algorithm~\ref{alg:collatz} again.
	\end{enumerate}

	We show the convergence of $\{ \underline{\lambda}_{s,k} \}$ and $\left\{ \overline{\lambda}_{s,k} \right\}$ in the following theorem.
	\begin{Thm}\label{thm5.3}
		Suppose that $A = A_s + A_d\epsilon$ is an $n \times n$ dual number matrix, where $n$ is a positive integer, $A_s$ is a real primitive matrix, and $A_d$ is a real matrix. Let $\vx^{(0)}$ be the initial iterate, $\vx_s^{(0)}$ be the vector of  ones, $\rho>0$,
		%an arbitrary  dual number  vector with a positive standard part
		and  $\lambda$ be the Perron eigenvalue  of $A$.  Let $\{ \underline{\lambda}_k \}$ and $\left\{ \overline{\lambda}_k \right\}$ be two dual number sequences generated by Algorithm 1.
		Then we have   $\lim_{k\rightarrow \infty}(\overline{\lambda}_{s,k}- \underline{\lambda}_{s,k})=0$. Furthermore, if $A_s$ is weakly positive, then
		\begin{equation}\label{con_rate}
			\overline{\lambda}_{s,k+1}- \underline{\lambda}_{s,k+1} \le \alpha(\overline{\lambda}_{s,k}- \underline{\lambda}_{s,k}),
		\end{equation}
		where $\alpha=1-\frac{\beta}{\overline{\mu}}$,   $\beta=\min\{\min\limits_{i,j=1,\dots,n,i\neq j}  a_{ijs}, \min\limits_{i=1,\dots,n} a_{iis}+\rho\}$, and $\overline{\mu}=\rho+\max\limits_{i=1,\dots,n}\sum_{j=1}^n a_{ijs}$.
	\end{Thm}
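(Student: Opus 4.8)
The plan is to reduce everything to the classical Collatz iteration for the real matrix $B_s := A_s + \rho I_n$. First I would note that the standard parts of the iterates produced by Algorithm~\ref{alg:collatz} are exactly those of the Collatz power iteration for $B_s$: the standard part of $\vy^{(k)}/\|\vy^{(k)}\|_2$ equals $\vy_s^{(k)}/\|\vy_s^{(k)}\|_2$ with $\vy_s^{(k)}=B_s\vx_s^{(k)}$; the bounds $\underline{\lambda}_{s,k}$ and $\overline{\lambda}_{s,k}$ equal $\min_i (B_s\vx_s^{(k)})_i/x_{i,s}^{(k)}$ and $\max_i (B_s\vx_s^{(k)})_i/x_{i,s}^{(k)}$, which are invariant under positive rescaling of $\vx_s^{(k)}$, and their difference is unchanged if $B_s$ is replaced by $A_s$ (shift invariance). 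Since $A_s$ is primitive, $B_s$ is a nonnegative primitive matrix with $\rho(B_s)=\lambda_s+\rho$, its diagonal is positive so $B_s\vx>\mathbf{0}$ whenever $\vx>\mathbf{0}$, hence every $\vx_s^{(k)}$ is positive; with $\vx_s^{(0)}=\mathbf{1}$ we have $\overline{\lambda}_{s,0}=\max_i\sum_j b_{ijs}=\overline{\mu}$, and Theorem~\ref{thm5.1} gives that $\{\overline{\lambda}_{s,k}\}$ is non-increasing and $\{\underline{\lambda}_{s,k}\}$ non-decreasing, both bracketing $\rho(B_s)$.

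For the limit $\overline{\lambda}_{s,k}-\underline{\lambda}_{s,k}\to 0$ I would invoke the Perron--Frobenius theory of primitive matrices \cite{BP94}: $\rho(B_s)$ is the unique eigenvalue of maximal modulus and is simple, so the normalized power iterates $\vx_s^{(k)}$ converge to the normalized positive Perron eigenvector $\vz$ of $B_s$ (the starting vector $\mathbf{1}$ has positive inner product with the positive left Perron eigenvector). Because $\vz>\mathbf{0}$, the maps $\vx\mapsto\min_i(B_s\vx)_i/x_i$ and $\vx\mapsto\max_i(B_s\vx)_i/x_i$ are continuous at $\vz$ and both equal $\rho(B_s)$ there, so $\underline{\lambda}_{s,k}\to\rho(B_s)$ and $\overline{\lambda}_{s,k}\to\rho(B_s)$, and their difference tends to $0$.

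For the quantitative bound under weak positivity, $B_s$ is entrywise positive with smallest entry $\beta=\min\{\min_{i\ne j}a_{ijs},\ \min_i(a_{iis}+\rho)\}>0$ and largest row sum $\overline{\mu}$. Fix $k$ and write $\vx=\vx_s^{(k)}$, $\vx'=B_s\vx$ (so $\vx_s^{(k+1)}$ is a positive multiple of $\vx'$, whence by scale invariance $\overline{\lambda}_{s,k+1}=\max_i(B_s\vx')_i/x'_i$ and similarly for $\underline{\lambda}_{s,k+1}$), $w=\overline{\lambda}_{s,k}-\underline{\lambda}_{s,k}$, $t_j=x'_j/x_j\in[\underline{\lambda}_{s,k},\overline{\lambda}_{s,k}]$, and $\theta_{ij}=b_{ijs}x_j/x'_i\ge0$, so $\sum_j\theta_{ij}=1$ and $(B_s\vx')_i/x'_i=\sum_j\theta_{ij}t_j$, a convex combination of the fixed numbers $t_j$. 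Letting $i^+$ and $i^-$ be the indices where this quantity is largest and smallest and using $\sum_j(\theta_{i^+j}-\theta_{i^-j})=0$ together with $t_j-\underline{\lambda}_{s,k}\in[0,w]$,
\[
\overline{\lambda}_{s,k+1}-\underline{\lambda}_{s,k+1}=\sum_j(\theta_{i^+j}-\theta_{i^-j})\bigl(t_j-\underline{\lambda}_{s,k}\bigr)\le w\Bigl(1-\sum_j\min\{\theta_{i^+j},\theta_{i^-j}\}\Bigr).
\]
Finally I would estimate $\sum_j\min\{\theta_{i^+j},\theta_{i^-j}\}\ge\sum_j\min_l\theta_{lj}=\sum_j x_j\min_l\frac{b_{ljs}}{x'_l}\ge\frac{\beta}{\overline{\mu}\|\vx\|_\infty}\sum_j x_j=\frac{\beta\|\vx\|_1}{\overline{\mu}\|\vx\|_\infty}\ge\frac{\beta}{\overline{\mu}}$, using $x'_l=\sum_m b_{lms}x_m\le\overline{\mu}\|\vx\|_\infty$ and $\|\vx\|_1\ge\|\vx\|_\infty$; this yields $\overline{\lambda}_{s,k+1}-\underline{\lambda}_{s,k+1}\le(1-\beta/\overline{\mu})(\overline{\lambda}_{s,k}-\underline{\lambda}_{s,k})$, i.e.\ \eqref{con_rate}.

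The delicate point is the pair of displayed estimates: extracting the sharp factor $1-\beta/\overline{\mu}$ relies on representing $(B_s\vx')_i/x'_i$ as a convex combination of the \emph{same} ratios $t_j$ for every $i$, so that the one-step shrinkage is governed by the overlap (a Dobrushin-type coefficient) of the probability vectors $(\theta_{ij})_j$; reasoning instead through norm bounds on $B_s(\vx'-\underline{\lambda}_{s,k}\vx)$ only delivers a constant of order $1-\beta^2/\overline{\mu}^2$. Everything else is the cited Perron--Frobenius facts \cite{BP94,V62} and the monotonicity already established in Theorem~\ref{thm5.1}.
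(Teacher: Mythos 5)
Your proof of the limit part follows the same route as the paper: convergence of the normalized power iterates to the Perron eigenvector of the primitive matrix, hence both ratio bounds converge to the Perron root and their difference vanishes (the paper is slightly cavalier about the shift, writing $A_s$ where $B_s=A_s+\rho I_n$ should appear, but the conclusion is shift-invariant, as you correctly note). Where you genuinely differ is the quantitative rate \eqref{con_rate}: the paper simply invokes Theorem~4.1 of \cite{ZQX12} with $m=2$, whereas you give a self-contained argument by writing $(B_s\vx')_i/x'_i$ as a convex combination $\sum_j\theta_{ij}t_j$ of the fixed ratios $t_j=(B_s\vx)_j/x_j$, and then bounding the one-step shrinkage of $\max-\min$ by one minus the Dobrushin-type overlap $\sum_j\min\{\theta_{i^+j},\theta_{i^-j}\}$, which you lower-bound by $\beta/\overline{\mu}$ using $b_{ljs}\ge\beta$, $x'_l\le\overline{\mu}\|\vx\|_\infty$, and $\|\vx\|_1\ge\|\vx\|_\infty$. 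Both routes are valid; the paper's is shorter but opaque (it outsources the heart of the matter to an external tensor-method paper), while yours is elementary, self-contained, and makes transparent exactly why the constant $1-\beta/\overline{\mu}$ appears — a real expository gain for a reader who does not want to chase down \cite{ZQX12}.
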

	\begin{proof}
		Since $A_s$ is a primitive matrix, we have   $\lim_{k\rightarrow \infty}(\lambda_s^{-1}A_s)^k=\vx_s\vy_s^\top$, where $\vx_s$ and $\vy_s$ are the right and the left Perron eigenvector of $A_s$, respectively.
		Therefore, we have
		$$\vx_s^{(k)}=\frac{A_s^k\vx_s^{(0)}}{\|A_s^k\vx_s^{(0)}\|_2}=\frac{(\lambda_s^{-1}A)^k\vx_s^{(0)}}{\|(\lambda_s^{-1}A)^k\vx_s^{(0)}\|_2}\rightarrow \frac{\vx_s}{\|\vx_s\|_2}.$$
		From this, we have $\lim_{k\rightarrow \infty}\overline{\lambda}_{s,k}=\lambda_s$ and $\lim_{k\rightarrow \infty}\underline{\lambda}_{s,k}=\lambda_s$.

		Furthermore, when $A_s$ is a weakly positive matrix, the result  in \eqref{con_rate} follows  directly from Theorem 4.1 in \cite{ZQX12} with $m=2$.
		
		This completes the proof.
	\end{proof}

	\section{Numerical Results}
	
	We show several numerical experiments for computing eigenpairs of dual number matrices with primitive and irreducible nonnegative standard parts.
	In Algorithm~\ref{alg:collatz}, we set $k_{\max}=2000$, $\delta_1=10^{-8}$, $\delta_2=10^{-8}$, and $\rho=1$ on default.
	
	{\subsection{Synthetic examples}}
	
	We first present several examples adopted from~\cite{ZQX12}.

	{\bf Example 5.1} Let $A_s$ satisfy $a_{1is}=a_{i1s}=1$ for $i=2,\dots,n$, and zero elsewhere, and $A_d$ be an $n\times n$ Jordan block with diagonal elements being one. In this example, $A_s$  is irreducible, but not primitive and weakly positive.
	
	{\bf Example 5.2} Let $A_s$ satisfy $a_{ijs}=i+j$ for $i,j=1,\dots,n$ and $i\neq j$, and zero elsewhere, and $A_d$ be an $n\times n$ Jordan block with diagonal elements being one. In this example,  $A_s$ is primitive and weakly positive, but not positive.
	
	{\bf Example 5.3} Let $A_s$ satisfy $a_{1ns}=1$, $a_{i1s}=1$ for $i=2,\dots,n$, $a_{nis}=1$ for $i=1,\dots,n-1$, and zero elsewhere, and $A_d$ be an $n\times n$ Jordan block with diagonal elements being one. In this example,  $A_s$  is primitive, but not weakly positive.
	
	{\bf Example 5.4} Let $A_s$ be a  random matrix where each element is  generated uniformly from $[0.1, 1.1]$, and $A_d$ be a Gaussian random matrix. In this example,  $A_s$  is positive.

	\begin{table}
		\centering
		\caption{Numerical results for Examples 5.1 to 5.4}
		\begin{tabular}{|c|c|cccc|}
			\hline
			Example &  $n$ & Eig & $\|A\vx-\lambda \vx\|_{F^R}$ & No. Iter & CPU time (s) \\ \hline
			& 10   &  $3.00+1.61\epsilon$  & 3.62e$-$11   &  32    &   2.08e$-$03 \\
			5.1 & 100   & $9.95+ 1.55 \epsilon$  & 2.86e-10  &   111  &   6.50e$-$03 \\
			& 1000  &  $ 31.61 +1.52\epsilon$  & 1.14e$-$09  &   355  &    4.08e$-$01 \\
			& 10000 &  $ 99.99+1.50\epsilon$   & 1.64e$-$09  &   1125   &   1.41e+02 \\
			\hline
			& 10  &  $ 103.62+1.89\epsilon$  & 2.73e-11  &   12    &   1.81e$-$03 \\
			5.2 & 100  &   $1.07\times 10^{5} + 1.99 \epsilon$ & 3.67e-12   &  8   &     1.60e$-$03 \\
			& 1000 &   $ 1.07\times 10^{7} +2.00\epsilon$  & 2.62e-11  &   8    &   1.37e$-$02 \\
			& 10000  &  $1.07\times 10^{9}+2.00 \epsilon$  & 2.47e-11  &   8   &    1.58e+00 \\
			\hline
			& 10   &  $2.17+ 1.58 \epsilon$ & 3.36e-11   &  32    &  2.29e$-$03 \\
			5.3 & 100  & $4.68+ 1.46 \epsilon$   & 1.17e-10   &  69    &   2.93e$-$03 \\
			& 1000 &   $10.03 +1.40 \epsilon$  & 3.86e-10  &   147   &    1.58e$-$01 \\
			& 10000  &  $ 21.56+1.36\epsilon$  & 9.14e-10  &   303   &    4.59e+01 \\
			\hline
			& 10      & $ 6.03+ 0.45 \epsilon$ &3.41e$-$09 &13.7   & 1.03e$-$03 \\
			5.4&100     & $ 59.98+ 0.13\epsilon$ & 4.67e$-$09 & 6.8   & 3.56e$-$04 \\
			&1000    &  $ 599.87+ 0.14\epsilon$ & 1.09e$-$08 & 4.7  &  1.02e$-$02 \\
			&10000    &  $ 5999.84 -0.33\epsilon$ & 2.38e$-$08&  3.7 &  8.63e$-$01 \\
			\hline
		\end{tabular}
		\label{tab:synth}
	\end{table}

	The numerical experiments for Examples 5.1-5.4 are shown in Table~\ref{tab:synth}.
	In Example 5.4, we repeat the random experiment 10 times and return the average performance.
	In this table, \textit{$n$} denotes the dimension of $A$,
	\textit{Eig} denotes the Perron-Frobenius eigenvalue,
	\textit{$\|A\vx-\lambda \vx\|_{F^R}$} denotes the residual in the eigenpair,
	\textit{No. Iter} denotes the number of
	iterations,
	\textit{CPU time (s)} denotes the CPU time  consumed in seconds.
	
	In all experiments, we can obtain the eigenpairs successfully with \textsc{Flag}=1 in Algorithm~\ref{alg:collatz}.
	This is corresponding to Theorem~\ref{thm5.3} that $\lim_{k\rightarrow \infty}(\overline{\lambda}_{s,k}- \underline{\lambda}_{s,k})=0$.
	We guess $\lim_{k\rightarrow \infty}(\overline{\lambda}_{d,k}- \underline{\lambda}_{d,k})=0$ also holds for primitive dual number matrices, though we do not have a proof at this moment.

	Examples 5.2 and 5.4 are weakly positive, and the iterative sequence converges very fast and stops in around 10 iterations.
	This is corresponding to Theorem~\ref{thm5.3} that $\overline{\lambda}_{s,k}- \underline{\lambda}_{s,k}$ converges to zero linearly.
	We guess $\overline{\lambda}_{k}- \underline{\lambda}_{k}$ also converges to zero linearly, and leave its proof for future study.

	\begin{figure}
		\centering
		\includegraphics[width=1\linewidth]{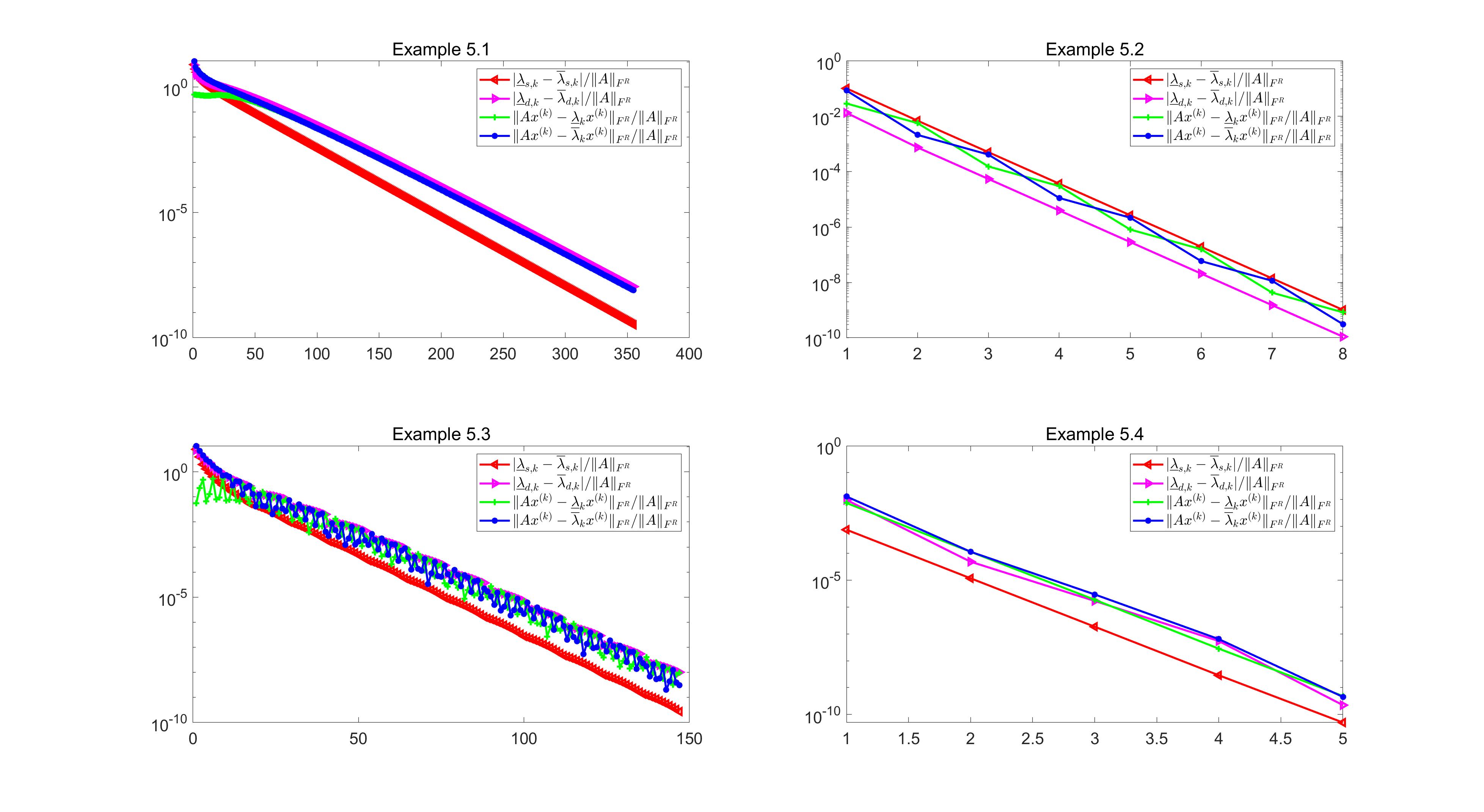}
		\caption{Iterative convergence results for Examples 5.1 to 5.4 with $n=1000$.}
		\label{fig:synth}
	\end{figure}
	
	{We further show the iterative convergence results for Examples 5.1 to 5.4 with $n=1000$ in Figure~\ref{fig:synth}.
		In this figure, we see that
		$\overline{\lambda}_{s,k}- \underline{\lambda}_{s,k}$,
		$\overline{\lambda}_{d,k}- \underline{\lambda}_{d,k}$,
		$\|A\vx^{(k)}-\underline{\lambda}_k\vx^{(k)}\|_{F^R}$, and $\|A\vx^{(k)}-\underline{\lambda}_k\vx^{(k)}\|_{F^R}$ all converge to zero almost linearly.
	}

	\subsection{Dual Markov chain}
	The dual Markov chain puts probability and perturbation in two parts.  This is different from the perturbed Markov chain \cite{M2005, LCN13}, which considers perturbed probability.
	
	\begin{Lem}\label{lem7.1}
		Suppose that $A = A_s + A_d\epsilon$ is an $n \times n$ dual number matrix, where $n$ is a positive integer, $A_s$ is a real primitive or irreducible nonnegative matrix, and $A_d$ is a real matrix.
		Assume that $\lambda$ and $\vx$ are  the Perron eigenvalue or Perron-Frobenius eigenvalue and eigenvector  of $A$, respectively.  Then we have the following results.
		\begin{itemize}
			\item[(i)]      Let $A(\theta) = A_s + \theta A_d\epsilon$ for all $\theta\in\mathbb R$, $\lambda(\theta)$ and $\vx(\theta)$ be  the Perron eigenvalue or Perron-Frobenius eigenvalue and eigenvector  of $A(\theta)$, respectively.  Then we have
			\begin{equation}
				\lambda(\theta) = \lambda_s+\theta \lambda_d \epsilon\text{ and } \vx(\theta)=\vx_s+\theta \vx_d{\epsilon}.
			\end{equation}
			% {Denote $\hat{\vx}(\theta)=\vx_s+\theta \vx_d$ for convenience.}
			\item[(ii)] Furthermore, assume $A_s^\top{\bf 1}={\bf 1}$ and $A_d^\top{\bf 1}={\bf 0}$. Then we have $\lambda_s=1$, and $\lambda_d=0$. Equivalently, we have
			$$\vx_s=A_s\vx_s\ \text{ and }\ \vx_d =  A_d\vx_s+A_s\vx_d.$$
			%      \item[(iii)] Let {the assumptions in item~$(ii)$ hold and $P(\theta)=A_s+\theta A_d$ be the perturbed transition matrix,}  $\vz(\theta)=\vx_s+\delta\vz_{\theta}\in\mathbb R^{n\times 1}$ be the stationary state of $P(\theta)$, i.e.,   $$\vx_s+\delta\vz_{\theta} =  (A_s+\theta A_d)(\vx_s+\delta\vz_{\theta}).$$  Then we have $\vz(\theta)-\hat{\vx}(\theta)=\delta\vz_{\theta} - \theta \vx_d$ and
			% \begin{equation}\label{equ:pmc-dmc}
			% 	\frac{\|\vz(\theta)-\hat{\vx}(\theta)\|_2}{\|\theta \vx_d\|_2}\le \frac{\theta\|(A_s-I_n)^D\|_2^2\|A_d\|_2}{1-\theta\|(A_s-I_n)^D\|_2^2\|A_d\|_2}.
			% \end{equation}
			% Here, $(A_s-I_n)^D$ is the Drazin inverse of $(A_s-I_n)$.
		\end{itemize}
	\end{Lem}
	\begin{proof}
		Item~(i) follows directly from \eqref{en3}.
		
		(ii)  It follows from $A_s^\top{\bf 1}={\bf 1}$ that $\lambda_s=1$ and $\vy_s = {\bf 1}$.  Combing this with $A_d^\top{\bf 1}={\bf 0}$ and \eqref{e6aa} derives $\lambda_d=0$.
		% $(iii)$ From the definition, we have $\delta\vz_{\theta}$ and  $\theta \vx_d$ satisfy  the following linear systems,
		% 	\begin{equation*}
		% 		(A_s-I_n+\theta A_d) \delta\vz_{\theta} =-\theta A_d\vx_s \text{ and }  (A_s-I_n)\theta\vx_d =-\theta A_d\vx_s,
		% 	\end{equation*}
		% 	respectively.
		% 	%Both systems are consistent, but there may be infinitely many solutions.
		%  {We may regard the first system as a perturbation of the second one.
		%  Since $A_s-I_n$ is singular, we apply the perturbation analysis of singular linear systems in \cite{ZW03} here.}
		% There is only one Jordan block corresponding to the zero eigenvalue in $A_s-I_n$, and the size of this Jordan block is one, we have the index of $A_s-I_n$  is equal to one.
		% 	Following the perturbation analysis of singular linear systems in \cite{ZW03}, we multiply $A_s-I_n$ on   both hand sides of the above equations, and consider
		% 	\begin{equation*}
		% 		(A_s-I_n)(A_s-I_n+\theta A_d) \delta\vz_{\theta} =\vb \text{ and }  (A_s-I_n)^2\theta\vx_d =\vb,
		% 	\end{equation*}
		% 	where $\vb=-\theta (A_s-I_n)A_d\vx_s$.
		% 	Then by Theorem 4.1 in \cite{ZW03}, we have  \eqref{equ:pmc-dmc}.
		% 	This completes the proof.
	\end{proof}
	
	\red{
		Next, we show the relationship between the dual Markov chain and the perturbed Markov chain by the perturbation analysis.
		
		\begin{Thm}\label{thm7.2}
			Suppose that $A = A_s + A_d\epsilon$ is an $n \times n$ dual number matrix, where $n$ is a positive integer, $A_s$ and $A_s+A_d$ are both real primitive or irreducible nonnegative matrices,  $A_s^\top{\bf 1}={\bf 1}$, and $A_d^\top{\bf 1}={\bf 0}$.
			Assume that $\lambda$ and $\vx$ are  the  Perron-Frobenius eigenvalue and eigenvector  of $A$, respectively, and $\vz=\vx_s+\delta\vz\in\mathbb R^{n\times 1}$ is the stationary state of  the perturbed transition matrix $P=A_s+ A_d$.
			Denote $\hat{\vx} = \vx_s+\vx_d$ and $(A_s-I_n)^D$ be the Drazin inverse of $A_s-I_n$.
			Then    $\vz-\hat{\vx}=\delta\vz - \vx_d$ and the following results hold.
			\begin{itemize}
				\item[(i)] If $\|(A_s-I_n)^DA_d\|_2<1$, then there exists $\vx_d$ such that
				\begin{equation}\label{equ:pmc-dmc:1}
					\frac{\|\vz-\hat{\vx}\|_2}{\| \vx_d\|_2}\le \frac{\kappa}{1-\|(A_s-I_n)^DA_d\|_2}\frac{\|A_d\|_2}{\|A_s-I_n\|_2},
				\end{equation}
				where $\kappa=\|A_s-I_n\|_2\|(A_s-I_n)^D\|_2$ is the condition number of group inverse of $A_s-I_n$.
				\item[(ii)] If $\|(A_s-I_n)^D\|_2^2\|A_d\|_2<1$, then for any $\vx_d$, we have
				\begin{equation}\label{equ:pmc-dmc}
					\frac{\|\vz-\hat{\vx}\|_2}{\| \vx_d\|_2}\le \frac{\|(A_s-I_n)^D\|_2^2\|A_d\|_2}{1-\|(A_s-I_n)^D\|_2^2\|A_d\|_2}.
				\end{equation}
			\end{itemize}
			
		\end{Thm}
		\begin{proof}
			From the definition, we have $P$ is also a transition matrix and
			$$\vx_s+\delta\vz =  (A_s+ A_d)(\vx_s+\delta\vz).$$
			By direct reformulations, we have $\delta\vz$ and  $ \vx_d$ satisfy  the following linear systems,
			\begin{equation*}
				(A_s-I_n+ A_d) \delta\vz =- A_d\vx_s \text{ and }  (A_s-I_n)\vx_d =- A_d\vx_s,
			\end{equation*}
			respectively.
			Because  $A_s$ and $A_s+A_d$ are both real primitive or irreducible nonnegative matrices,
			$\vx$ and $\vz$ exist.
			Hence, both linear systems above are consistent.  We may regard the first system as a perturbation of the second one.
			Since $A_s-I_n$ is singular, we apply the perturbation analysis of singular linear systems in \cite{Wei00, ZW03} here.
			An important notion  is the index of a singular matrix.
			Because $A_s-I_n$ has only one Jordan block corresponding to the zero eigenvalue  and the size of this Jordan block is one,  the index of $A_s-I_n$  is   one.

			Then item~(i) follows directly from Theorem~2.1 in \cite{Wei00}.
			
			(ii) Following   \cite{ZW03}, we multiply $A_s-I_n$ on   both hand sides of the above equations, and consider
			\begin{equation*}
				(A_s-I_n)(A_s-I_n+ A_d) \delta\vz =\vb \text{ and }  (A_s-I_n)^2\vx_d =\vb,
			\end{equation*}
			where $\vb=- (A_s-I_n)A_d\vx_s$.
			It follows from  $A_d^\top{\bf 1}=\0$ that $A_d$ is in the range space of $A_s-I_n$.
			Then by Theorem 4.1 in \cite{ZW03},   \eqref{equ:pmc-dmc} holds true.
			
			This completes the proof.
		\end{proof}
	}
	
	We first consider a   Markov chain with two states.
	Suppose that $A = A_s+A_d\epsilon$, with $A_s$ being the transition probability matrix and $A_d$ being its perturbation, where
	\begin{equation*}
		A_s = \left(\begin{array}{cc}
			0.5 & 0.5 \\
			0.5 & 0.5
		\end{array} \right)
		\text{ and }
		A_d = \left(\begin{array}{cc}
			0.3524 &  -0.0288 \\
			-0.3524 &   0.0288
		\end{array} \right).
	\end{equation*}
	For this example, we have $A_s^\top{\bf 1}={\bf 1}$ and $A_d^\top{\bf 1}={\bf 0}$.
	By direct computations and Lemma~\ref{lem7.1}, we have $\lambda=1$ and $\vx = \left(\begin{array}{c}
		0.7071 \\
		0.7071
	\end{array} \right)
	+ \left(\begin{array}{c}
		0.2288 \\
		-0.2288\end{array} \right)\epsilon$.
	
	{Denote the perturbed transition probability matrix by} $P(\theta)=A_s+\theta A_d$ {and its stationary state by}  $\vz(\theta)$ for $\theta\in[-1,1]$.  By direct computations, we have
	\begin{equation*}
		\vz(-1) = \left(\begin{array}{c}
			0.5272 \\
			0.8497
		\end{array} \right), \ \vz(0) = \left(\begin{array}{c}
			0.7071 \\
			0.7071
		\end{array} \right),
		\text{ and }
		\vz(1) = \left(\begin{array}{c}
			0.9543 \\
			0.2990
		\end{array} \right).
	\end{equation*}
	
	For dual Markov chain, we let $A(\theta)=A_s+\theta A_d \epsilon$ and compute  its Perron-Frobenius eigenpairs $\lambda(\theta)$ and $\vx(\theta)$. {By Lemma~\ref{lem7.1},}  we have
	\begin{equation*}
		\lambda(\theta)\equiv 1, \quad  \vx(\theta) = \left(\begin{array}{c}
			0.7071 \\
			0.7071
		\end{array} \right)
		+ \theta \left(\begin{array}{c}
			0.2288 \\
			-0.2288\end{array} \right)\epsilon.
	\end{equation*}
	
	\begin{figure}
		\centering
		\includegraphics[width=0.8\linewidth]{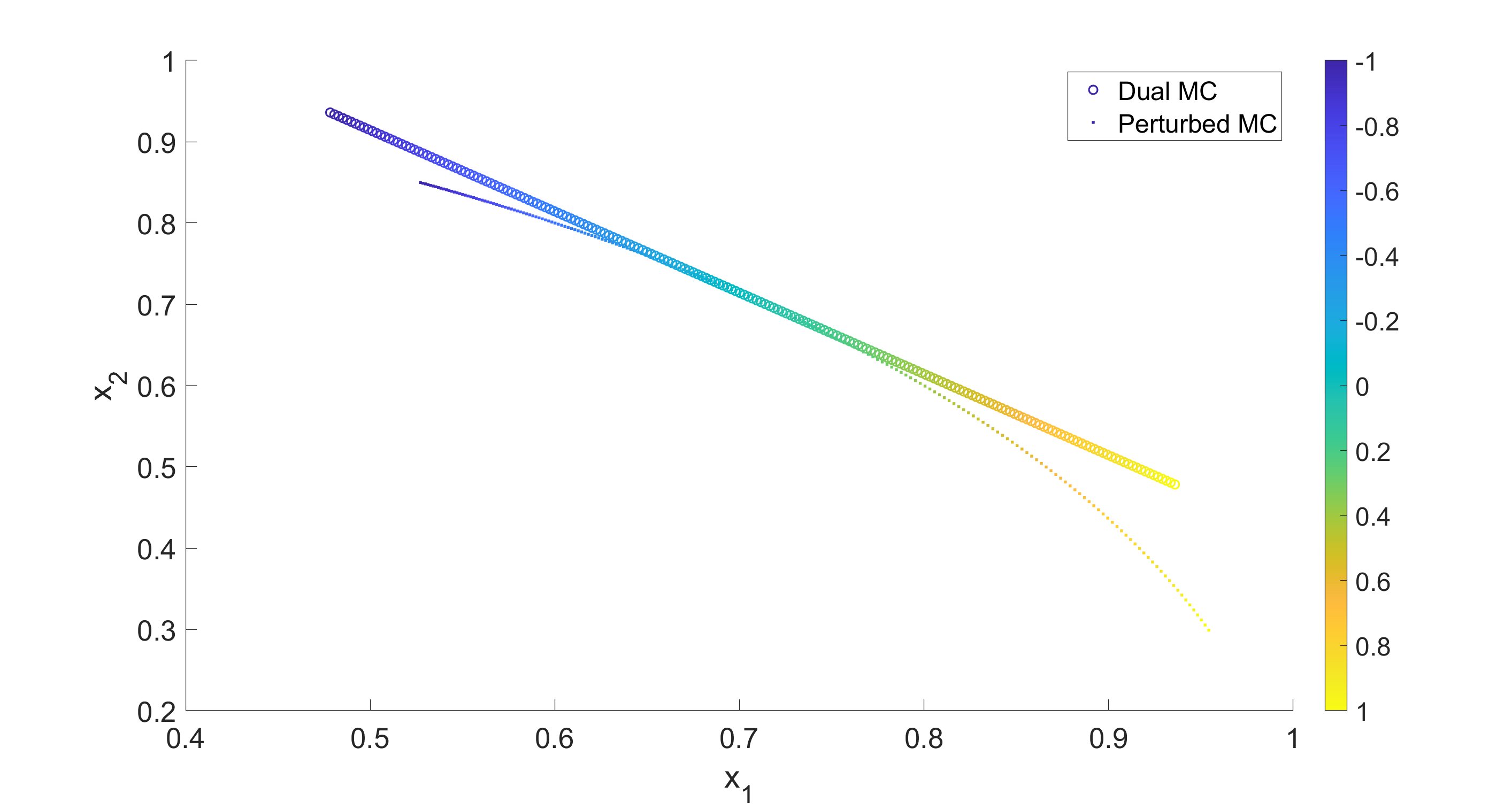}
		\caption{Comparisons of stationary states of dual Markov chain and  perturbed Markov chain by the 2-dimensional example. The colorbar denotes the value of $\theta$.}
		\label{fig:DMC}
	\end{figure}
	
	We plot $\vz(\theta)$ and $\hat{\vx}(\theta)=\vx_s+\theta\vx_d$ with $\theta\in[-1,1]$ in Figure~\ref{fig:DMC}.  From this figure, we see that   $\vz(\theta)$ and $\hat{\vx}(\theta)$ has similar perturbation {tendency}.
	The stationary states of the  perturbed Markov chain are nonlinear in $\theta$ while  the perturbed stationary vector from dual Markov chain is linear in $\theta$.
	When $\theta$ is small, the stationary states from the two methods are close to each other. This validates \red{the results in Theorem~\ref{thm7.2}}.
	
	\red{We continue to consider three real world transition matrices in \cite{CFN03,NQZ09}. A large soft-drink company in Hong Kong faces an in-house problem of production
		planning and inventory control.  All products are labelled as either very high sales volume (state 1), higher sales volume (state 2), standard sales
		volume (state 3), low sales volume (state 4) and very low sales volume (state 5).
		We first formulate the transition matrices from the categorical data sequences for the demands of   products A, B, and C   given in the appendix of \cite{CFN03}.
		Then we generate  the perturbation matrices $A_d$ as follows: we generate the first $n-1$ rows of $A_d$    by \texttt{randn},  compute the last row {such that $A_d^\top{\bf 1}={\bf 0}$}, and normalize $A_d$ such that $\|A_d\|_F=0.1$.
		The numerical distance of stationary states of perturbed Markov chain $\vz(\theta)$ and  dual Markov chain $\hat{\vx}(\theta)$ and their theoretical upper bound in  \eqref{equ:pmc-dmc}  are shown in Table~\ref{tab:DMC}.
		From this table, we see that the theoretical upper bound given in  \eqref{equ:pmc-dmc} is quite close to the numerical results.
	}
	
	At last, we consider  large-scale Markov Chains with random   transition probability matrices.
	Specifically, we generate  $A_s$ by \texttt{randi} in MATLAB and normalize each column such that its summation is equal to one. We generate the first $n-1$ rows of $A_d$    by \texttt{randn},  compute the last row {such that $A_d^\top{\bf 1}={\bf 0}$}, and normalize $A_d$ such that $\|A_d\|_F=0.5$.  We implement all experiments ten times and show their  average performance.
	\red{The results in Table~\ref{tab:DMC} are similar to that of the real transition matrix cases and validate the upper bound in  Theorem~\ref{thm7.2}.}
	
	%From this table, we see that   $\vz(\theta)$ and $\hat{\vx}(\theta)$ has similar perturbation results.

	\begin{table}[]
		\centering
		\caption{Numerical results on the distance  of stationary states between perturbed Markov chain $\vz(\theta)$ and dual Markov chain $\hat{\vx}(\theta)$.}
		\begin{tabular}{|c|cc|cc|cc|}\hline
			\multicolumn{7}{|c|}{Real transition matrices} \\ \hline
			&  \multicolumn{2}{|c|}{Product A} &
			\multicolumn{2}{|c|}{Product B} &
			\multicolumn{2}{|c|}{Product C} \\ \hline
			$\theta$	& $\|\vz(\theta)-\hat{\vx}(\theta)\|_2$  & \eqref{equ:pmc-dmc} &$\|\vz(\theta)-\hat{\vx}(\theta)\|_2$ &  \eqref{equ:pmc-dmc} & $\|\vz(\theta)-\hat{\vx}(\theta)\|_2$ &   \eqref{equ:pmc-dmc} \\\hline
			0.2 & 9.06e$-$05&  1.12e$-$03 & 4.39e$-$05 & 1.62e$-$04 & 9.06e$-$05&  1.12e$-$03 \\
			0.4 & 3.64e$-$04 & 4.97e$-$03 & 1.75e$-$04 & 6.69e$-$04 & 3.64e$-$04 & 4.97e$-$03  \\
			0.6 & 8.21e$-$04 & 1.25e$-$02 & 3.91e$-$04 & 1.56e$-$03 & 8.21e$-$04&  1.25e$-$02  \\
			0.8 & 1.47e$-$03 & 2.54e$-$02 & 6.92e$-$04 & 2.86e$-$03  &1.47e$-$03 & 2.54e$-$02  \\
			1.0 & 2.30e$-$03 & 4.60e$-$02 & 1.08e$-$03 & 4.64e$-$03 & 2.30e$-$03 & 4.60e$-$02  \\
			\hline
			\multicolumn{7}{|c|}{Random transition matrices} \\ \hline
			& \multicolumn{2}{|c|}{$n=10$} &\multicolumn{2}{|c|}{$n=100$} &\multicolumn{2}{|c|}{$n=1000$} \\ \hline
			$\theta$	& $\|\vz(\theta)-\hat{\vx}(\theta)\|_2$  & \eqref{equ:pmc-dmc} &$\|\vz(\theta)-\hat{\vx}(\theta)\|_2$ &  \eqref{equ:pmc-dmc} & $\|\vz(\theta)-\hat{\vx}(\theta)\|_2$ &   \eqref{equ:pmc-dmc} \\\hline
			0.2 & 1.15e$-$03 &  2.95e$-$03 & 8.66e$-$05 & 6.25e$-$04& 1.01e$-$05 & 2.59e$-$04  \\
			0.4 & 4.62e$-$03 & 1.38e$-$02 & 3.47e$-$04 &  2.77e$-$03  & 4.05e$-$05 & 1.13e$-$03  \\
			0.6 & 1.04e$-$02 & 3.73e$-$02 & 7.82e$-$04 & 6.99e$-$03 & 9.13e$-$05 & 2.78e$-$03  \\
			0.8 & 1.87e$-$02 & 8.31e$-$02 & 1.39e$-$03 &  1.42e$-$02 & 1.62e$-$04 & 5.46e$-$03  \\
			1.0 & 2.93e$-$02 & 1.74e$-$01 & 2.18e$-$03 & 2.57e$-$02 & 2.54e$-$04 & 9.54e$-$03  \\
			\hline
		\end{tabular}
		\label{tab:DMC}
	\end{table}

	\bigskip
	
	\section{Further Remarks}
	
	In this paper, we have extended the Perron-Frobenius theory to dual number matrices with primitive and irreducible {nonnegative} standard parts.  One motivation of our research is to consider probabilities as well as perturbation, or error bounds, or variances, in the Markov chain process.  We call such a Markov chain a dual Markov chain. A dual number matrix with a primitive or irreducible nonnegative standard part  always has a positive dual number eigenvalue with a positive dual number eigenvector.   If the standard part of the dual number matrix is primitive, then this positive eigenvalue is larger than the modulus of any other eigenvalue.  We call this positive eigenvalue the Perron eigenvalue of the dual number matrix then.   If the standard part of the dual number matrix is irreducible nonnegative but not primitive, then there are several other eigenvalues such that the {modulus} %modulii
	of the standard part of each of these eigenvalues is equal to the standard part of this positive eigenvalue.   Then we call this positive eigenvalue the Perron-Frobenius eigenvalue of the dual number matrix.   The Collatz minimax theorem holds for such dual number matrices.  Based upon this, we construct an algorithm to compute this positive eigenvalue and its eigenvector.   In the case that the standard part of the dual number matrix is irreducible nonnegative but not primitive, we adopt a ``shift'' technique to distinguish this positive eigenvalue such that the sequence generated by the algorithm can be convergent.   Numerical results confirmed our theoretical discussion.
	\red{We also compared the dual Markov chain and the perturbed Markov chain by both theoretical upper bound and numerical experiments.}

	There are some questions that remain for future research.
	One question, assume that $A = A_s + A_d\epsilon$ is an $n \times n$ dual number matrix.   A conjecture is that $A^k$ tends to $O$ if and only if $\rho(A_s) < 1$.
	{This is true if $A_sA_d=A_dA_s$.}
	Is this true {for the general case}?  If it is, how to prove it?     In Section 6, there are also several convergence and convergence rate results, which are observed numerically, without theoretical proofs.
	
	\bigskip
	\red{
		{\bf Acknowledgment}  We are thankful to Professor Yimin Wei for discussion on perturbation analysis of singular linear systems, and Professor Lubin Cui for discussion on Markov chain.
	}
	
	%{\bf Acknowledgment}  We are thankful to Professors Shaoqiang Deng and Honglian Zhang for discussion on Lie algebra, {and Professor Yuanhua Ni and his students for discussion on kinematics control}.   In particular, we are grateful to Professor Chengming Bai and his Ph.D. student Yuanchang Lin, whose note enabled the Lie algebra identification in Section 3.
	
	%and Zhongming Chen for the discussion on standard dual quaternion optimization, to Wei Li for the discussion on hand-eye calibration, to Jiantong Cheng for the discussion on SLAM, to Guyan Ni for introducing Jiantong Cheng to me, and to Chen Ouyang and Jinjie Liu for Figures 1 and 2.   I would like to thank two anonymous referees who carefully read my manuscript and gave very helpful comments.

	%\section*{Compliance with ethical standards}
	\bigskip
	
	%{\bf Conflicts of Interest} The author declares no conflict of interest.

	% \vspace{100pt}

\end{document}